\newtheorem{theorem}{Theorem}[section]
\newtheorem{proposition}[theorem]{Proposition}
\newtheorem{corollary}[theorem]{Corollary}
\newtheorem{example}[theorem]{Example}
\newtheorem{remark}[theorem]{Remark}
\newtheorem{lemma}[theorem]{Lemma}
\newtheorem{definition}[theorem]{Definition}
\begin{document}
\title{\sc Uniform approximation on ideals of multilinear mappings}

\author{Geraldo Botelho\thanks{Supported by CNPq Project 202162/2006-0.}~, Pablo Galindo\thanks{ Supported
partially by MEC-FEDER Project  MTM 2007-64521.} ~and Leonardo
Pellegrini\thanks{Supported by ProIP Project-USP.\hfill \newline
2000 Mathematics Subject Classification: 46G25.}}

\date{}
\maketitle \vspace*{-1.0em}

\begin{abstract}
For each ideal of multilinear mappings $\cal M$ we explicitly construct a corresponding ideal $^{a}\!{\cal
M}$ such that multilinear forms in $^{a}\!{\cal M}$ are exactly those which can be approximated, in the
uniform norm, by multilinear forms in ${\cal M}$. This construction is then applied to finite type, compact,
weakly compact and absolutely summing multilinear mappings. It is also proved that the correspondence ${\cal
M} \mapsto \,\! ^{a}\!{\cal M}$ is Aron-Berner stability preserving.
\end{abstract}

\section*{Introduction}

The theory of ideals of multilinear mappings (multi-ideals)
between Banach spaces (see \cite{note, studia, bpr, braunss, bj,
cdps,FloretResults, Floret, fh, FloretGarcia, pg, Pietsch} and
references therein) studies, as in the theory of linear operator
ideals, plenty of non-closed ideals. So the question of describing
their closures in the uniform norm is quite natural. Our main aim
is to construct an approximation scheme for multi-ideals similar
to the H. Jarchow and A. Pe{\l}czy\'nski description of the closed
injective hull of an operator ideal, which can be found in
\cite[Section 20.7]{jarchow}.

 Given a multi-ideal $\cal M$,  we construct a corresponding
multi-ideal $^{a}\!{\cal M}$ such that every multilinear form in
$^{a}\!{\cal M}$ can be approximated, in the uniform norm, by
multilinear forms in $\cal M$. For  multilinear mappings taking
values in a Banach space $F$ the approximation takes place, like
in the linear case, in the larger space $\ell_\infty(B_{F^*})$. We
do not only  prove the existence of such multi-ideal $^{a}\!{\cal
M}$, we give it an explicit description that roughly speaking
means that a multilinear mapping $A$ can be approximated by
elements in $\cal M$ if the norm of any sum of images of $A$ is
almost "dominated" by the norm of the sum of the corresponding
images by some element in $\cal M,$ (see Definition 2.1). Relying
on such description we obtain several properties of
$^{a}\!{\cal M}.$ This is done in section 2.\\
\indent It should be noted that, as usual, there are several {\it
a priori} possibilities to transpose the Jarchow-Pe{\l}czy\'nski
construction to the multilinear case. The direct and most obvious
transposition simply does not work. So an important step was to
identify, among all possible multilinear generalizations of the
Jarchow-Pe{\l}czy\'nski construction, the one that performs the
desired approximation. The results we prove show that we have
selected the correct
definition. \\
\indent The rest of paper is organized as follows. Section 1 provides the basic notions and
fixes the notation.
 In section 3 we apply the approximation theorem to
finite type and compact mappings. Denoting the multi-ideal of
multilinear mappings of finite type by ${\cal L}_{f}$, we prove
that its corresponding ideal $^{a}\!({\cal L}_{f})$ coincides,
modulo the approximation property, with the extensively studied
(see  \cite{ahv} or \cite{din} for instance) class of multilinear
mappings which are weakly continuous on bounded sets. The case of
weakly compact and absolutely summing mappings is studied in
section 4, where we prove multilinear counterparts of some
important linear results. In section 5 we show that the
correspondence ${\cal M} \mapsto\,\! ^{a}\!{\cal M}$ preserves the
stability of ${\cal M}$ with respect to Aron-Berner extensions of
multilinear mappings to the bidual spaces; this will extend to the
multilinear setting known results about bitranspose linear
operators. A contribution to the linear theory is also obtained.

\section{Background and notation}
Throughout $n$ is a positive integer, $E, E_1, \ldots, E_n, F,
G_1, \ldots, G_n$ and $H$ are (real or complex) Banach spaces.
${\cal L}(E;F)$ denotes the Banach space, endowed with the usual
sup norm, of bounded linear operators from $E$ to $F$ and ${\cal
L}(E_1, \ldots, E_n;F)$ the Banach space, endowed with the usual
sup norm, of continuous $n$-linear mappings from $E_1 \times
\cdots \times E_n$ to $F$. If $F$ is the scalar field we simply
write $E^*$ and ${\cal L}(E_1, \ldots, E_n)$. If $E_1 = \cdots =
E_n = E$ we write ${\cal L}(^nE;F)$ and ${\cal L}(^nE)$. Linear
combinations of mappings of the form $A(x_1, \ldots, x_n) =
\varphi_1(x_1) \cdots \varphi_n(x_n) b$, where $\varphi_j \in
E_j^*$ and $b \in F$, are called {\it $n$-linear mappings of
finite type}. The space of all such mappings is denoted by ${\cal
L}_f(E_1, \ldots, E_n;F)$. The mappings belonging to its closure
$\overline{{\cal L}_f}$ are called {\it approximable}. For each
$A\in {\cal L}(E_1, \ldots, E_n;F),$ we denote by $A_L \in {\cal
L}(E_1 \widehat\otimes_\pi \cdots \widehat\otimes_\pi E_n;F)$ its
linearization on the
completed $n$-fold projective tensor product %$E_1
%\widehat\otimes_\pi \cdots \widehat\otimes_\pi E_n$,
 that is defined by
$$A_L(x_1 \otimes \cdots \otimes x_n) = A(x_1,
\ldots, x_n){\rm ~for~all~}x_j \in E_j.
$$
For the general theory of multilinear mappings between Banach
spaces we refer to S. Dineen \cite{din}
\begin{definition}[Ideals of multilinear mappings or multi-ideals]
\label{Definition 1.3} $~${\rm An {\it ideal of $n$-linear mappings} (or {\it $n$-ideal}) $\cal M$ is a
subclass of the class of all continuous
 $n$-linear mappings between Banach spaces such that for Banach spaces $E_1, \ldots, E_n$ and $F$, the components ${\cal M}
 (E_1, \ldots, E_n;F) := {\cal L}(E_1, \ldots, E_n;F) \cap {\cal M}$ satisfy:
 \\
\noindent (i) ${\cal M}(E_1, \ldots, E_n;F)$ is a linear subspace of
${\cal L}(E_1, \ldots, E_n;F)$ which contains the $n$-linear mappings of finite type.\\
(ii) The ideal property: if $A \in {\cal M}(E_1, \ldots, E_n;F)$,
$u_j \in {\cal L}(G_j;E_j)$ for $j = 1, \ldots, n$ and $t \in
{\cal L}(F;H)$, then $t \circ A \circ (u_1, \ldots, u_n)$ is in
${\cal M}(G_1, \ldots, G_n;H)$.

\medskip

\noindent If there is a function $\|\cdot\|_{\cal M} \colon {\cal
M} \longrightarrow \mathbb{R}^+$ satisfying

\medskip

\noindent (i') There is $0 < p \leq 1$ such that $\|\cdot\|_{\cal M}$ restricted to ${\cal M}(E_1, \ldots,
E_n;F)$ is a $p$-norm for all
 Banach spaces $E_1, \ldots, E_n$ and $F$,\\
\noindent (ii') $\|A \colon \mathbb{K}^n \longrightarrow \mathbb{K} : A(\lambda_1,\ldots,
 \lambda_n)
 = \lambda_1 \cdots \lambda_n \|_{\cal M} = 1$ for all $n$,\\
\noindent (iii') If $A \in {\cal M}(E_1, \ldots, E_n;F)$, $u_j \in
{\cal L}(G_j;E_j)$ for $j = 1, \ldots, n$ and $t \in {\cal
L}(F;H)$, then $\|t \circ A \circ (u_1, \ldots, u_n)\|_{\cal M}
\leq \|t\| \|A\|_{\cal M} \|u_1\|\cdots \|u_n\|$,

\smallskip

\noindent then ${\cal M}$ is called a {\it quasi-normed} ({\it normed} if $p = 1$) {\it $n$-ideal}. {\it
Quasi-Banach} ({\it Banach} if $p = 1$) {\it $n$-ideals} are defined in the obvious way. A {\it (Banach)
multi-ideal} is a sequence $({\cal M}_n)_{n=1}^\infty$ where each ${\cal M}_n$ is a (Banach) $n$-ideal. }
\end{definition}
When $n=1$ we recover the classical theory of operator ideals, for
which the reader is referred to \cite{df}.
\medskip

\indent A Banach $n$-ideal ${\cal M}$ is said to be {\it closed} if each component ${\cal M}(E_1, \ldots,
E_n;F)$ is a (sup-norm) closed subspace of ${\cal L}(E_1, \ldots, E_n;F)$. A Banach multi-ideal $({\cal
M}_n)_{n=1}^\infty$ is
closed if each ${\cal M}_n$ is closed. \\
\indent An $n$-ideal ${\cal M}$ is {\it injective} if whenever $A
\in {\cal L}(E_1, \ldots, E_n;F)$, $i \colon F \longrightarrow G$
is an isometric embedding and $i \circ A \in {\cal M}(E_1, \ldots,
E_n;G)$, then $A \in {\cal M}(E_1, \ldots, E_n;F)$.

\section{The approximation scheme}

\begin{definition}\rm Let $\cal M$ be an  $n$-ideal.
A mapping $A \in {\cal L}(E_1, \ldots, E_n;F)$ is said to be ${\cal M}$-{\it approximable}, in symbols $A \in
{^a}\!{\cal M}(E_1, \ldots, E_n;F)$, if there are a Banach space $G$ and an $n$-linear mapping $B \in {\cal
M}(E_1, \ldots, E_n;G)$ such that for every $\varepsilon >0$ there is $K_{\varepsilon} > 0$ such that
$$\left \|\sum_{i=1}^k A(x_i^1, \ldots, x_i^n)\right \| \leq K_{\varepsilon}
\left \|\sum_{i=1}^k B(x_i^1, \ldots, x_i^n)\right \| +
\varepsilon \sum_{i=1}^k \|x_i^1\|\cdots \|x_i^n\|,$$ for every $k
\in \mathbb{N}$ and any $x_i^j \in E_j$, $j =1, \ldots, n$, $i =
1, \ldots, k$.
\end{definition}

\begin{remark}\rm (a) It is obvious from the definition that, for every $n$-ideal
${\cal M}$, ${^a}\!{\cal M}$ is injective. It is also obvious that ${^a}\!{\cal M} \subseteq {^a}\!{\cal N}$
if ${\cal M} \subseteq {\cal N}$.
\\(b) Let us see that ${^a}\!({^a}\!{\cal
M}) = {^a}\!{\cal M}$: it is clear that $^a\!{\cal M}$ contains
$\cal M$, so ${^a}\!{\cal M} \subseteq {^a}\!({^a}\!{\cal M})$.
For the converse, given $A \in {^a}\!({^a}\!{\cal M})(E_1, \ldots,
E_n;F)$, there are a Banach space $G$ and an $n$-linear mapping $B
\in {^a}\!{\cal M}(E_1, \ldots, E_n;G)$ such that for every
$\varepsilon
>0$ there is $K_{\varepsilon}$ such that
$$\left \|\sum_{i=1}^k A(x_i^1, \ldots, x_i^n)\right \| \leq K_{\varepsilon}
\left \|\sum_{i=1}^k B(x_i^1, \ldots, x_i^n)\right \| +
\varepsilon \sum_{i=1}^k \|x_i^1\|\cdots \|x_i^n\|,$$ for every $k
\in \mathbb{N}$ and any $x_i^j \in E_j$, $j =1, \ldots, n$, $i =
1, \ldots, k$. Since $B \in {^a}\!{\cal M}$, there are a Banach
space $G$ and an $n$-linear mapping $B \in {^a}\!{\cal M}(E_1,
\ldots, E_n;G)$ such that for every $\varepsilon
>0$ there is $N_{\varepsilon}$ such that
$$\left \|\sum_{i=1}^k B(x_i^1, \ldots, x_i^n)\right \| \leq N_{\varepsilon}
\left \|\sum_{i=1}^k C(x_i^1, \ldots, x_i^n)\right \| +
\varepsilon \sum_{i=1}^k \|x_i^1\|\cdots \|x_i^n\|,$$ for every $k
\in \mathbb{N}$ and any $x_i^j \in E_j$, $j =1, \ldots, n$, $i =
1, \ldots, k$. Given $\varepsilon > 0$, one easily checks that for
every $k \in \mathbb{N}$ and any $x_i^j \in E_j$, $j =1, \ldots,
n$, $i = 1, \ldots, k,$
\begin{eqnarray*}
\left \|\sum_{i=1}^k A(x_i^1, \ldots, x_i^n)\right \| \!\! & \leq
%& \!\! K_{\frac{\varepsilon}{2}} \left \|\sum_{i=1}^k B(x_i^1,
%\ldots, x_i^n)\right \| + \frac{\varepsilon}{2} \sum_{i=1}^k
%\|x_i^1\|\cdots \|x_i^n\|\\
%\!\!& \leq &\!\! K_{\frac{\varepsilon}{2}}\left
%[N_{\frac{\varepsilon}{2K_{\frac{\varepsilon}{2}}}} \left
%\|\sum_{i=1}^k C(x_i^1, \ldots, x_i^n)\right \| +
%\frac{\varepsilon}{2K_{\frac{\varepsilon}{2}}} \sum_{i=1}^k
%\|x_i^1\|\cdots \|x_i^n\| \right ] \\
%\!\!& &\!\! + \frac{\varepsilon}{2} \sum_{i=1}^k \|x_i^1\|\cdots
%\|x_i^n\|\\
%\!\!& =
&\!\! L_{\varepsilon} \left \|\sum_{i=1}^k C(x_i^1,
\ldots, x_i^n)\right \| + \varepsilon \sum_{i=1}^k \|x_i^1\|\cdots
\|x_i^n\|,
\end{eqnarray*}
where $L_{\varepsilon}$ may be chosen as
$K_{\frac{\varepsilon}{2}}\cdot
N_{\frac{\varepsilon}{2K_{\frac{\varepsilon}{2}}}}.$ Thus, $A \in
{^a}\!{\cal M}(E_1,
\ldots, E_n;F)$.\\
(c) For operator ideals, this is the Jarchow-Pe{\l}czy\'nski description of the closed injective hull of a
given operator ideal (see \cite[Theorem 20.7.3]{jarchow}). In particular, when $n = 1$ and ${\cal M}$ is the
ideal $\Pi_p$ of absolutely $p$-summing operators, $1 \leq p < +\infty$, we get ${^a}\!{\cal M} = {\cal H}$,
where ${\cal H}$ is the ideal of absolutely continuous linear
operators (see \cite[Chapter 15]{djt})%, \cite{mp,matter}).
\end{remark}

\begin{proposition}\label{proposition} If $\cal M$ is a Banach $n$-ideal, then $^a\!{\cal M}$ is
a closed injective $n$-ideal containing $\cal M$.
\end{proposition}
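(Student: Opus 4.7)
The plan is to verify the four pieces separately---containment of $\cal M$, linear and finite-type structure, ideal property, and closedness---with injectivity already recorded in Remark 2.2(a). Containment is immediate: given $A\in{\cal M}(E_1,\dots,E_n;F)$, take $G=F$ and $B=A$, so the inequality in Definition 2.1 holds with $K_\varepsilon=1$.

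For the linear and ideal properties the uniform strategy is to fabricate a single witness in $\cal M$ out of the given ones. If $A_1,A_2\in{^a}\!{\cal M}(E_1,\dots,E_n;F)$ have witnesses $B_j\in{\cal M}(E_1,\dots,E_n;G_j)$, I set $B:=\iota_1\circ B_1+\iota_2\circ B_2\in{\cal M}(E_1,\dots,E_n;G_1\oplus_\infty G_2)$, where $\iota_j$ are the coordinate isometries, so $B\in\cal M$ by linearity and the ideal property of $\cal M$; since $\|\sum_i B_j(x_i)\|\leq\|\sum_i B(x_i)\|$, summing the two estimates with $\varepsilon/2$ in place of $\varepsilon$ yields a common witness for $A_1+A_2$. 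Scalar multiplication is handled by rescaling $K_\varepsilon$, and the inclusion ${\cal L}_f\subseteq{^a}\!{\cal M}$ follows from ${\cal L}_f\subseteq\mathcal{M}$. For the ideal property, given $A$ with witness $B$, the composition $B':=B\circ(u_1,\dots,u_n)$ lies in $\cal M$ by the ideal property of $\cal M$; replacing $\varepsilon$ by $\varepsilon/(\|t\|\|u_1\|\cdots\|u_n\|)$ in the estimate for $A$ and then applying $t$ to both sides gives the required inequality for $t\circ A\circ(u_1,\dots,u_n)$ with witness $B'$.

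The substantive point is closedness. Let $A_m\to A$ uniformly with $A_m\in{^a}\!{\cal M}(E_1,\dots,E_n;F)$, and fix witnesses $B_m\in{\cal M}(E_1,\dots,E_n;G_m)$ together with constants $K_\varepsilon^{(m)}$. The obstacle is that Definition 2.1 requires \emph{one} space $G$ and \emph{one} mapping $B\in\cal M$ serving every $\varepsilon>0$ simultaneously, so the $B_m$'s must be amalgamated into a single element of $\cal M$. I would extract $m_k$ with $\|A-A_{m_k}\|<2^{-k-1}$, choose $\lambda_k=2^{-k}/(1+\|B_{m_k}\|_{\cal M})$, and set
\[
B:E_1\times\cdots\times E_n\longrightarrow c_0(\{G_{m_k}\}),\qquad B(x):=\bigl(\lambda_k B_{m_k}(x)\bigr)_k.
\]
Writing $B=\sum_k \iota_k\circ(\lambda_k B_{m_k})$ with $\iota_k$ the isometric embedding of $G_{m_k}$ as the $k$th coordinate, each summand lies in $\cal M$ with $\cal M$-norm at most $2^{-k}$, so the series converges absolutely in the Banach $n$-ideal $\cal M$; its $\cal M$-limit coincides pointwise with $B$ (which indeed lands in $c_0$ since $\lambda_k\|B_{m_k}\|\to 0$), so $B\in{\cal M}$.

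To obtain the estimate, given $\varepsilon>0$ I would pick $k$ with $\|A-A_{m_k}\|<\varepsilon/2$ and combine
\[
\Bigl\|\sum_i A(x_i^1,\dots,x_i^n)\Bigr\|\leq\Bigl\|\sum_i A_{m_k}(x_i^1,\dots,x_i^n)\Bigr\|+\|A-A_{m_k}\|\sum_i\|x_i^1\|\cdots\|x_i^n\|
\]
with the defining estimate for $A_{m_k}$ at level $\varepsilon/2$ and the pointwise bound $\|\sum_i B_{m_k}(x_i)\|\leq\lambda_k^{-1}\|\sum_i B(x_i)\|$ coming from reading off the $k$th coordinate of $B$; this produces the required inequality with $K_\varepsilon:=\lambda_k^{-1}K_{\varepsilon/2}^{(m_k)}$. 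The remaining verifications are bookkeeping, and the conceptual difficulty is confined to this amalgamation.
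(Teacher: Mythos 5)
Your proof is correct and follows essentially the same route as the paper: containment, sum and ideal property by fabricating a single witness via a two-term direct sum, and closedness by amalgamating normalized witnesses into an absolutely convergent series in the complete component of $\cal M$, then recovering the estimate from one coordinate. The only (immaterial) differences are your use of $c_0$/$\ell_\infty$-sums where the paper uses $\ell_1$-sums, and the normalization $1+\|B_{m_k}\|_{\cal M}$ in place of the paper's assumption $B_j\neq 0$.
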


\begin{proof} Since $^a\!{\cal M}$ contains $\cal M$, it also contains
the $n$-linear mappings of finite type.

Given $A_1, A_2 \in {^a}\!{\cal M}(E_1, \ldots, E_n;F)$, let $B_j
\in {\cal M}(E_1, \ldots, E_n;G_j)$, $j = 1,2$, for which the
definition holds. Define $G := G_1 \oplus_1 G_2$ and $B \in {\cal
L}(E_1, \ldots, E_n;G)$ by $B(x_1, \ldots, x_n) = (B_1(x_1,
\ldots, x_n), B_2(x_1, \ldots, x_n)).$ $B$ belongs to $\cal M$ as
$B = i_i \circ B_1 + i_2 \circ B_2$, where $i_j$ are the canonical
inclusions. It is a routine computation to verifiy that such $B$
fulfills
the conditions of the definition for $A_1 + A_2.$  %Give $\varepsilon >0$,
Thus $A_1 + A_2 \in {^a}\!{\cal M}(E_1, \ldots, E_n;F)$.\\
%\indent Given $A \in {^a}\!{\cal M}(E_1, \ldots, E_n;F)$, $u_j \in
%{\cal L}(G_j;E_j)$ for $j = 1, \ldots, n$ and $t \in {\cal
%L}(F;H)$, let $B \in {\cal M}(E_1, \ldots, E_n;Y)$ for which the
%definition holds. Define $C \in {\cal L}(G_1, \ldots, G_n;Y)$ by
%$C := B\circ (u_1, \ldots, u_n)$. It is clear that $C$ belongs to
%$\cal M$ and then straightforward that %. Give $\varepsilon >0$,
%take $K$ such that
%$$\left \|\sum_{i=1}^k A(x_i^1, \ldots, x_i^n)\right \| \leq K
%\left \|\sum_{i=1}^k B(x_i^1, \ldots, x_i^n)\right \| +
%\frac{\varepsilon}{\|u_1\|\cdots\|u_n\|\|t\|} \sum_{i=1}^k
%\|x_i^1\|\cdots \|x_i^n\|,$$ for every $k \in \mathbb{N}$ and any
%$x_i^j \in E_j$, $j =1, \ldots, n$, $i = 1, \ldots, k$. Therefore,
%$$\left \|\sum_{i=1}^k t(A (u_1(x_i^1), \ldots, u_n(x_i^n)))\right \|
%\leq  \|t\|\left\|\sum_{i=1}^k (A (u_1(x_i^1), \ldots,
%u_n(x_i^n))\right \|\hspace*{30em}$$
%$$ \leq K \|t\|
%\left \|\sum_{i=1}^k B(u_1(x_i^1), \ldots, u_n(x_i^n))\right \| +
%\frac{\varepsilon}{\|u_1\|\cdots\|u_n\|} \sum_{i=1}^k
%\|u_1(x_i^1)\|\cdots \|u_n(x_i^n)\| \hspace*{30em}$$
%$$ \leq K \|t\|
%\left \|\sum_{i=1}^k C(x_i^1, \ldots, x_i^n)\right \| +
%\varepsilon \sum_{i=1}^k \|x_i^1\|\cdots \|x_i^n\|,
%\hspace*{30em}$$ showing that
%$t \circ A \circ (u_1, \ldots, u_n)$
%belongs to ${^a}\!{\cal M}$.\\
\indent The ideal property is easily checked.\\
\indent Let $(A_j)_{j=1}^\infty \subseteq {^a}\!{\cal M}(E_1, \ldots, E_n;F)$, $A_j \longrightarrow A \in
{\cal L}(E_1, \ldots, E_n;F)$ in norm. For each $j \in \mathbb{N}$, take $G_j$ and $0 \neq B_j \in {\cal
M}(E_1, \ldots, E_n;G_j)$ associated to $A_j$ according to the definition. Define $G := \left (
\displaystyle\oplus_{j=1}^\infty G_j\right)_1$ and for each $j$ consider the canonical inclusion $i_j \colon
G_j \longrightarrow G$. It is clear each $i_j \circ B_j$ belongs to $\cal M$. Since
\[\sum_{j=1}^\infty \frac{1}{2^j\|B_j\|_{\cal M}} \|i_j
\circ B_j\|_{\cal M} \leq \sum_{j=1}^\infty
\frac{1}{2^j\|B_j\|_{\cal M}} \|i_j\|\| B_j\|_{\cal M} =
\sum_{j=1}^\infty \frac{1}{2^j} < +\infty,\] and $({\cal M}(E_1,
\ldots, E_n;G), \|\cdot\|_{\cal M})$ is a Banach space, the series
$\sum_{j=1}^\infty \frac{1}{2^j\|B_j\|_{\cal M}} i_j \circ B_j$
converges in this space. Call
$$B
:= \sum_{j=1}^\infty \frac{1}{2^j\|B_j\|_{\cal M}} i_j \circ B_j
\in {\cal M}(E_1, \ldots, E_n;G).$$ From $\|\cdot \| \leq
\|\cdot\|_{\cal M}$ (see \cite[Satz 2.2.5]{braunss}) it follows
that such series is pointwise convergent.
%$$B(x_1, \ldots, x_n)
%= \sum_{j=1}^\infty \frac{1}{2^j\|B_j\|_{\cal M}} i_j(B_j(x_1,
%\ldots, x_n)) {\rm ~for~every~}x_i \in E_i.
%$$
Given $\varepsilon >0$, let $j_0 \in \mathbb{N}$ be such that
$\|A_{j_0} - A\|< \displaystyle\frac{\varepsilon}{2}$. Take $K$
such that for every $k \in \mathbb{N}$  and any $x_i^j \in E_j$,
$j =1, \ldots, n$, $i = 1, \ldots, k,$
$$\left \|\sum_{i=1}^k A_{j_0}(x_i^1, \ldots, x_i^n)\right \| \leq K
\left \|\sum_{i=1}^k B_{j_0}(x_i^1, \ldots, x_i^n)\right \| +
\frac{\varepsilon}{2} \sum_{i=1}^k \|x_i^1\|\cdots \|x_i^n\|.$$
 Then, for every $k \in \mathbb{N}$ and any $x_i^j \in E_j$, $j =1,
\ldots, n$, $i = 1, \ldots, k,$
\begin{eqnarray*} \left \|\sum_{i=1}^k A(x_i^1,
\ldots, x_i^n) \right\| \!\!& \leq &\!\! \left \|\sum_{i=1}^k (A -
A_{j_0})(x_i^1, \ldots, x_i^n) \right\| + \left \|\sum_{i=1}^k
A_{j_0}(x_i^1, \ldots, x_i^n) \right\|\\\!\!& \leq &\!\! K \left
\|\sum_{i=1}^k B_{j_0}(x_i^1, \ldots, x_i^n)\right \| +
\varepsilon \sum_{i=1}^k \|x_i^1\|\cdots
\|x_i^n\|\end{eqnarray*} \begin{eqnarray*} \\
\!\!& = &\!\! K 2^{j_0}\|B_{j_0}\|_{\cal M}\left \|\sum_{i=1}^k
\frac{B_{j_0}(x_i^1, \ldots, x_i^n)}{2^{j_0}\|B_{j_0}\|_{\cal
M}}\right \| + \varepsilon \sum_{i=1}^k \|x_i^1\|\cdots
\|x_i^n\|\\
%\!\!& = &\!\! K 2^{j_0}\|B_{j_0}\|_{\cal M}\left \|\sum_{i=1}^k
%\frac{B_{j_0}(x_i^1, \ldots, x_i^n)}{2^{j_0}\|B_{j_0}\|_{\cal
%M}}\right \| + \varepsilon \sum_{i=1}^k \|x_i^1\|\cdots \|x_i^n\|\\
\!\!& \leq &\!\! K 2^{j_0}\|B_{j_0}\|_{\cal M}
\sum_{j=1}^\infty\left \| \sum_{i=1}^k \frac{B_{j}(x_i^1, \ldots,
x_i^n)}{2^{j}\|B_{j}\|_{\cal M}}\right \| +
\varepsilon\sum_{i=1}^k \|x_i^1\|\cdots \|x_i^n\|\\
\!\!& \leq &\!\! K 2^{j_0}\|B_{j_0}\|_{\cal M} \left \|
\sum_{i=1}^k B(x_i^1, \ldots, x_i^n)\right \| +
\varepsilon\sum_{i=1}^k \|x_i^1\|\cdots \|x_i^n\|.
\end{eqnarray*}%\end{small}
 It results that $A$ belongs to
${^a}\!{\cal M}$, completing the proof.
\end{proof}

%By ${\cal L}_f$ and ${\cal L}_A$ we denote the ideals of finite type and approximable multilinear mappings,
%respectively (that is, ${\cal L}_A = \overline{{\cal L}_f}$).

%\begin{corollary}\label{primeirocorolario} Let $\cal M$ be a Banach multi-ideal. Then
%${^a}\!({\cal L}_A) \subseteq {^a}\!{\cal M}$. Thus, if ${\cal L}(E_1, \ldots, E_n;F)= {\cal L}_A(E_1,
%\ldots, E_n;F)$, then ${\cal L}(E_1, \ldots, E_n;F) = {^a}\!{\cal M}(E_1, \ldots, E_n;F)$
%\end{corollary}

%\begin{proof} We have %${\cal M}$,
%${\cal L}_f \subseteq {\cal M} \subseteq {^a}\!{\cal M}$ and that
%${^a}\!{\cal M}$ is closed by Proposition \ref{proposition}, so
%${\cal L}_A = \overline{{\cal L}_f} \subseteq {^a}\!{\cal M}$.
%Therefore, ${^a}\!({\cal L}_A) \subseteq {^a}\!({^a}\!{\cal M}) =
%{^a}\!{\cal M}$.
%\end{proof}

%The above corollary tells us that if all $n$-linear forms on a
%Banach space are approximable, then they can be approximated by
%elements in any  Banach multi-ideal. This is the case of $c_0$,
%$T^*,$  Tsirelson`s original space, and $T_J^*,$  the
%Tsirelson-James space.

%\begin{examples}\label{exemplos}\rm $T^*$ is Tsirelson`s original space and $T_J^*$
%is the Tsirelson-James space. Since ${\cal L}(^nc_0)= {\cal
%L}_A(^nc_0)$, ${\cal L}(^nT^*)= {\cal L}_A(^nT^*)$ and ${\cal
%L}(^nT_J^*)= {\cal L}_A(^nT_J^*)$ for every $n$, then ${\cal
%L}(^nc_0)= {^a}\!{\cal M}(^nc_0)$, ${\cal L}(^nT^*)= {^a}\!{\cal
%M}(^nT^*)$ and ${\cal L}(^nT_J^*)= {^a}\!{\cal M}(^nT_J^*)$ for
%every $n$ and any Banach multi-ideal $\cal M$.
%\end{examples}

For every Banach space $E$, by $i_E$ we mean the canonical isometric embedding $E \longrightarrow
\ell_\infty(B_{E^*})$. Besides of performing the desired approximation scheme, next result shows that
${^a}\!{\cal M}$ could have been defined by a condition which seems to be less demanding at first glance.

\begin{theorem}\label{teorema} Let $\cal M$ be an  $n$-ideal. The following
are equivalent for an $n$-linear mapping $A \in {\cal L}(E_1,
\ldots, E_n;F)$:\\
{\rm (a)} $A \in {^a}\!{\cal M}(E_1, \ldots, E_n;F)$.\\
{\rm (b)} For every $\varepsilon > 0$ there is an $n$-linear
mapping $C \in {\cal M}(E_1, \ldots, E_n;\ell_\infty(B_{F^*}))$
such that $\|i_F \circ A - C\| < \varepsilon$.\\
{\rm (c)} There is a set $\Gamma$ and an isometric embedding
$i_F^\Gamma \colon F \longrightarrow \ell_\infty(\Gamma)$ such
that for every $\varepsilon
> 0$ there is an $n$-linear mapping $C \in {\cal M}(E_1, \ldots,
E_n;\ell_\infty(\Gamma))$ such that $\|\i_F^\Gamma \circ A - C\| <
\varepsilon$. \\
{\rm (d)} For every $\varepsilon > 0$ there are a Banach space
$G_\varepsilon$ and an $n$-linear mapping $B_\varepsilon \in {\cal
M}(E_1, \ldots, E_n;G_\varepsilon)$ such that
$$\left \|\sum_{i=1}^k A(x_i^1, \ldots, x_i^n)\right \| \leq
\left \|\sum_{i=1}^k B_\varepsilon(x_i^1, \ldots, x_i^n)\right \|
+ \varepsilon \sum_{i=1}^k \|x_i^1\|\cdots \|x_i^n\|,$$ for every
$k \in \mathbb{N}$ and any $x_i^j \in E_j$, $j =1, \ldots, n$, $i
= 1, \ldots, k$.
\end{theorem}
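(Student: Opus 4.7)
I would prove the cycle $(a)\Rightarrow(b)\Rightarrow(c)\Rightarrow(d)\Rightarrow(a)$. Three of the four links are short. The implication $(b)\Rightarrow(c)$ is immediate by choosing $\Gamma:=B_{F^*}$. For $(c)\Rightarrow(d)$, I would set $B_\varepsilon:=C$, note that $\|i_F^\Gamma\circ A-C\|<\varepsilon$ gives termwise $\|(i_F^\Gamma A-C)(x_i^1,\ldots,x_i^n)\|\leq\varepsilon\|x_i^1\|\cdots\|x_i^n\|$, and use the isometric character of $i_F^\Gamma$ to obtain
$$\left\|\sum_{i=1}^k A(x_i^1,\ldots,x_i^n)\right\|\leq\left\|\sum_{i=1}^k C(x_i^1,\ldots,x_i^n)\right\|+\varepsilon\sum_{i=1}^k\|x_i^1\|\cdots\|x_i^n\|.$$
For $(d)\Rightarrow(a)$ I would mimic the closedness argument of Proposition \ref{proposition}: applying (d) with $\varepsilon=1/m$ yields $B_m\in{\cal M}(E_1,\ldots,E_n;G_m)$; setting $G:=(\oplus_m G_m)_1$ and $B:=\sum_m (2^m\|B_m\|_{\cal M})^{-1} i_m\circ B_m\in{\cal M}(E_1,\ldots,E_n;G)$, the $\ell_1$-sum structure yields $(2^m\|B_m\|_{\cal M})^{-1}\|\sum_i B_m(x_i^1,\ldots,x_i^n)\|\leq\|\sum_i B(x_i^1,\ldots,x_i^n)\|$, and Definition 2.1 then holds with $K_\varepsilon:=2^m\|B_m\|_{\cal M}$ for any $m$ with $1/m<\varepsilon$.

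The heart of the theorem is $(a)\Rightarrow(b)$, which I would handle by linearization to the projective tensor product and Hahn--Banach. Fix $\varepsilon>0$ and let $B\in{\cal M}(E_1,\ldots,E_n;G)$ and $K:=K_{\varepsilon/2}$ witness (a). Passing to the linearizations $A_L,B_L$ on $X:=E_1\widehat\otimes_\pi\cdots\widehat\otimes_\pi E_n$, taking the infimum of $\sum\|x_i^1\|\cdots\|x_i^n\|$ over representations rewrites Definition 2.1 on the algebraic tensor product and density extends it to
$$\|A_L(u)\|_F\leq K\|B_L(u)\|_G+\tfrac{\varepsilon}{2}\|u\|_\pi\qquad(u\in X).$$
For each $y^*\in B_{F^*}$, this says precisely that the linear functional $(B_L u,u)\mapsto y^* A_L u$ has norm at most $1$ on the graph subspace $\{(B_L u,u):u\in X\}\subseteq G\oplus X$, where the ambient space carries the norm $\|(g,u)\|:=K\|g\|+(\varepsilon/2)\|u\|_\pi$. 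A Hahn--Banach extension, together with the duality $(G\oplus X)^*=G^*\oplus X^*$, then produces $\alpha_{y^*}\in G^*$ with $\|\alpha_{y^*}\|\leq K$ and $\beta_{y^*}\in X^*$ with $\|\beta_{y^*}\|\leq\varepsilon/2$ satisfying $y^*\circ A_L=\alpha_{y^*}\circ B_L+\beta_{y^*}$.

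To finish, I would assemble the $\alpha_{y^*}$'s into a single operator $T\colon G\to\ell_\infty(B_{F^*})$ defined by $T(g)(y^*):=\alpha_{y^*}(g)$; the uniform bound $\|\alpha_{y^*}\|\leq K$ makes $T$ a well-defined bounded linear map, and $C:=T\circ B\in{\cal M}(E_1,\ldots,E_n;\ell_\infty(B_{F^*}))$ by the ideal property. For any $(x_1,\ldots,x_n)$ with $\|x_j\|\leq 1$,
$$\|(i_F\circ A-C)(x_1,\ldots,x_n)\|=\sup_{y^*\in B_{F^*}}|\beta_{y^*}(x_1\otimes\cdots\otimes x_n)|\leq\tfrac{\varepsilon}{2},$$
so $\|i_F\circ A-C\|<\varepsilon$. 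The main obstacle I anticipate is the calibration of the weighted norm on $G\oplus X$, so that the Hahn--Banach splitting yields a $G^*$-piece whose bound is uniform in $y^*\in B_{F^*}$; this uniformity is exactly what allows the individual $\alpha_{y^*}$'s to assemble into one bounded operator, and is what forces the approximation to take place in the larger space $\ell_\infty(B_{F^*})$ rather than in $F$ itself.
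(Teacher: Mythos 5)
Your core implication (a) $\Rightarrow$ (b) is correct and is, in substance, the paper's proof of (d) $\Rightarrow$ (b): pass to linearizations on $E_1\widehat\otimes_\pi\cdots\widehat\otimes_\pi E_n$, obtain $\|A_L(u)\|\leq K\|B_L(u)\|+\frac{\varepsilon}{2}\pi(u)$, and split $y^*\circ A_L$ through the graph of $u\mapsto (B_L u,u)$. Your coordinatewise Hahn--Banach over $y^*\in B_{F^*}$, reassembled into $T\colon G\to\ell_\infty(B_{F^*})$ with $\|T\|\leq K$, is exactly the paper's extension of $i_F\circ b_\varepsilon$ to $(G\oplus(E_1\otimes_\pi\cdots\otimes_\pi E_n))_1$ via the $1$-injectivity of $\ell_\infty(B_{F^*})$; the paper merely folds your weights $K$ and $\varepsilon/2$ into the map $a_\varepsilon(\theta)=((B_\varepsilon)_L\theta,\varepsilon\theta)$ instead of into the norm on $G\oplus X$. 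The links (b) $\Rightarrow$ (c) and (c) $\Rightarrow$ (d) are fine as you state them.

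The gap is in your (d) $\Rightarrow$ (a). The series $B=\sum_m (2^m\|B_m\|_{\cal M})^{-1}\, i_m\circ B_m$ makes sense, and is certified to lie in $\cal M$, only when $\cal M$ carries a complete ideal norm: the partial sums belong to $\cal M$ by the ideal property, but the limit is in $\cal M$ only because the series converges in the Banach space $({\cal M}(E_1,\ldots,E_n;G),\|\cdot\|_{\cal M})$ --- that is precisely the hypothesis of Proposition \ref{proposition}, which you are importing. Theorem \ref{teorema}, however, assumes only that $\cal M$ is an $n$-ideal, and the paper really uses this generality (e.g.\ it applies the theorem to ${\cal L}_f$, which is not sup-norm complete), so as written your closing link establishes the equivalence only for Banach $n$-ideals. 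The paper orients the cycle the other way to sidestep this: it proves (d) $\Rightarrow$ (b) directly --- your Hahn--Banach argument works verbatim with $B_{\varepsilon/2}$ from (d) in place of your fixed $B$ and with $K=1$ --- and then closes with (c) $\Rightarrow$ (a), using only the injectivity of ${^a}\!{\cal M}$ (immediate from Definition 2.1, no norm needed) together with the inclusion $\overline{\cal M}\subseteq{^a}\!{\cal M}$. You should either add the Banach hypothesis to your last step or reroute it as the paper does; be aware, though, that producing the single $B$ required by Definition 2.1 from a sequence in $\cal M$ is the genuinely delicate point here, and the paper's own appeal to $\overline{\cal M}\subseteq{^a}\!{\cal M}$ for a non-normed ideal deserves the same scrutiny.
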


\begin{proof} (d) $\Longrightarrow$ (b) Let $\varepsilon > 0$ be given.
By assumption, there are $G_\varepsilon$ and $B_\varepsilon \in
{\cal M}(E_1, \ldots, E_n;G_\varepsilon)$ such that for every $k
\in \mathbb{N}$ and any $x_i^j \in E_j$, $j =1, \ldots, n$, $i =
1, \ldots, k,$
$$\left \|\sum_{i=1}^k A(x_i^1, \ldots, x_i^n)\right \| \leq
\left \|\sum_{i=1}^k B_\varepsilon(x_i^1, \ldots, x_i^n)\right \|
+ \varepsilon \sum_{i=1}^k \|x_i^1\|\cdots \|x_i^n\|.$$  %By $A_L$ we mean the linearization of $A$ on the
%completed $n$-fold projective tensor product $E_1
%\widehat\otimes_\pi \cdots \widehat\otimes_\pi E_n$, that is
%$$A_L \in {\cal L}(E_1 \widehat\otimes_\pi \cdots \widehat\otimes_\pi
%E_n;F){\rm ~and~}A_L(x_1 \otimes \cdots \otimes x_n) = A(x_1,
%\ldots, x_n){\rm ~for~all~}x_j \in E_j.
%$$
So we have for their respective linearizations $A_L$ and $ (B_\varepsilon)_L,$
$$\left \|A_L \left( \sum_{i=1}^k x_i^1 \otimes \cdots \otimes x_i^n
\right)\right \| \leq \left \|(B_\varepsilon)_L \left(\sum_{i=1}^k x_i^1 \otimes \cdots \otimes x_i^n
\right)\right \| + \varepsilon \sum_{i=1}^k \|x_i^1\|\cdots \|x_i^n\|,$$ for every $\sum_{i=1}^k x_i^1
\otimes \cdots \otimes x_i^n \in E_1 \otimes_\pi \cdots \otimes_\pi E_n$. Taking the infimum over all
representations of a tensor $\theta \in E_1 \otimes_\pi \cdots \otimes_\pi E_n$ in the form $\theta =
\sum_{i=1}^k x_i^1 \otimes \cdots \otimes x_i^n$ it results that $\|A_L(\theta) \| \leq
\|(B_\varepsilon)_L(\theta)\| + \varepsilon\pi(\theta)$ for every $\theta \in E_1 \otimes_\pi \cdots
\otimes_\pi E_n$. Since $A_L$ and $(B_\varepsilon)_L$ are continuous, this inequality also holds on $E_1
\widehat\otimes_\pi \cdots \widehat\otimes_\pi E_n$. Consider the following linear operator:
$$a_\varepsilon \colon E_1 \otimes_\pi \cdots
\otimes_\pi E_n \longrightarrow (G_\varepsilon \oplus (E_1 \otimes_\pi \cdots \otimes_\pi
E_n))_1~,~a_\varepsilon(\theta) = ( (B_\varepsilon)_L(\theta), \varepsilon \theta).$$ The fact that
$a_\varepsilon$ is injective allows us to define a linear operator $b_\varepsilon \colon {\rm
Range}(a_\varepsilon) \longrightarrow F$ by $b_\varepsilon(a_\varepsilon(\theta)) = A_L(\theta)$. From
$$\|b_\varepsilon(a_\varepsilon(\theta)\| = \|A_L(\theta)\| \leq \|(B_\varepsilon)_L(\theta)\| +
\varepsilon\pi(\theta) = \|a_\varepsilon(\theta)\|,$$ we find that
$\|b_\varepsilon\|\leq 1.$ As  a continuous linear operator from
the normed space ${\rm Range}(a_\varepsilon)$ into the injective
Banach space $\ell_\infty(B_{F^*})$, $i_F \circ b_\varepsilon$ can
be extended to a continuous linear operator $\overline{i_F \circ
b_\varepsilon} \colon (G_\varepsilon \oplus (E_1 \otimes_\pi
\cdots \otimes_\pi E_n))_1 \longrightarrow \ell_\infty(B_{F^*})$,
$\|\overline{i_F \circ b_\varepsilon}\| \leq 1$. Define $C \in
{\cal L}(E_1, \ldots, E_n;\ell_\infty(B_{F^*}))$ by
$$C(x_1, \ldots, x_n) = \overline{i_F \circ b_\varepsilon}(
B_\varepsilon(x_1, \ldots, x_n),0).$$ If $j_\varepsilon \colon
G_\varepsilon \longrightarrow (G_\varepsilon \oplus (E_1
\otimes_\pi \cdots \otimes_\pi E_n))_1$ is given by
$j_\varepsilon(y) = (y,0)$, then \linebreak $C = \overline{i_F
\circ b_\varepsilon} \circ j_\varepsilon \circ B_\varepsilon$,
hence $C \in {\cal M}(E_1, \ldots, E_n;\ell_\infty(B_{F^*}))$. For
$(x_1, \ldots, x_n) \in E_1 \times \cdots \times E_n$,
\begin{eqnarray*}
i_F \circ A_L(x_1 \otimes \cdots \otimes x_n) &=&
i_F(b_\varepsilon (a_\varepsilon(x_1 \otimes \cdots \otimes
x_n)))\\
& =& \overline{i_F \circ b_\varepsilon}(((B_\varepsilon)_L(x_1
\otimes \cdots \otimes x_n), \varepsilon x_1 \otimes \cdots
\otimes x_n))\\
& =& C(x_1, \ldots, x_n) + \overline{i_F \circ b_\varepsilon}((0,
\varepsilon x_1 \otimes \cdots \otimes x_n)).
\end{eqnarray*}
\begin{eqnarray*}\text{ Thus, }\;\;
\|i_F \circ A(x_1,  \ldots , x_n) - C(x_1,  \ldots , x_n)\| & = &
\|\overline{i_F \circ b_\varepsilon}((0, \varepsilon x_1 \otimes
\cdots \otimes x_n))\|\\
& \leq & \|\overline{i_F \circ b_\varepsilon}\|\varepsilon \pi(x_1
\otimes \cdots \otimes x_n)\\
& \leq & \varepsilon \|x_1\| \cdots \|x_n\|,
\end{eqnarray*}
which proves that $\|i_F \circ A - C\| \leq \varepsilon$.\\
(c) $\Longrightarrow$ (a) The assumption assures that $i_F^\Gamma
\circ A $ belongs to the sup-norm closure of ${{\cal M}(E_1,
\ldots, E_n;\ell_\infty(\Gamma))}$. It follows easily from the
definition of ${^a}\!{\cal M}$ that $\overline{\cal M} \subseteq
{^a}\!{\cal M}$, therefore $i_F^\Gamma \circ A \in {^a}\!{\cal
M}(E_1, \ldots, E_n;\ell_\infty(\Gamma))$. The injectivity of
${^a}\!{\cal M}$
gives $A \in {^a}\!{\cal M}(E_1, \ldots, E_n;F))$.\\
\indent The proof is complete as (a) $\Longrightarrow$ (d) and (b)
$\Longrightarrow$ (c) are obvious.
\end{proof}

\begin{remark}\rm Given an $n$-ideal $\cal M$,  it follows immediately from Theorem
\ref{teorema} that, for every $E_1, \ldots, E_n$ and $F$,
\begin{eqnarray*}
{^a}\!{\cal M}(E_1, \ldots, E_n;F) \!\!\!& = \!\!\!&\{A \in {\cal
L}(E_1, \ldots, E_n;F) : i_F \circ A \in  \overline{{\cal M}(E_1,
\ldots,
E_n;\ell_\infty(B_{F^*})}\}\\
\!\!\!& = \!\!\!&\{A \in {\cal L}(E_1, \ldots, E_n;F) : i_F^\Gamma
\circ A \in  \overline{{\cal M}(E_1, \ldots, E_n;\ell_\infty(\Gamma))}{\rm ~for} \\
\!\!\!& & \,\,{\rm some~}\Gamma {\rm ~and~some~isometric~
embedding~}i_F^\Gamma \colon F \longrightarrow
\ell_\infty(\Gamma)\}.
\end{eqnarray*}
\end{remark}

In the proof of Theorem \ref{teorema}, if $F$ is injective there
is no need to go to the larger space $\ell_\infty(B_{F^*})$.
\begin{corollary}\label{corollary} Let $\cal M$ be
 an $n$-ideal. If $F$ is an injective Banach space, then ${^a}\!{\cal M}(E_1,
\ldots, E_n;F) = \overline{{\cal M}(E_1, \ldots, E_n;F)}$ for every $E_1, \ldots, E_n$. In particular,
${^a}\!{\cal M}(E_1, \ldots, E_n) = \overline{{\cal M}(E_1, \ldots, E_n)}$ for every $E_1, \ldots, E_n$.
\end{corollary}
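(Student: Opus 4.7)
The plan is to prove the two inclusions separately. The easy one, $\overline{{\cal M}(E_1,\ldots,E_n;F)} \subseteq {^a}\!{\cal M}(E_1,\ldots,E_n;F)$, is essentially free: it was noted inside the proof of Theorem \ref{teorema} (argument (c) $\Longrightarrow$ (a)) that $\overline{\cal M} \subseteq {^a}\!{\cal M}$ always holds, with no injectivity hypothesis on the range space. So the real content is the reverse inclusion, which I will derive by revisiting step (d) $\Longrightarrow$ (b) of Theorem \ref{teorema} and replacing the target $\ell_\infty(B_{F^*})$ by $F$ itself.

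Concretely, given $A \in {^a}\!{\cal M}(E_1,\ldots,E_n;F)$ and $\varepsilon > 0$, I first invoke characterization (d) to produce $G_\varepsilon$ and $B_\varepsilon \in {\cal M}(E_1,\ldots,E_n;G_\varepsilon)$ satisfying the domination inequality. I then pass to the linearizations $A_L$ and $(B_\varepsilon)_L$ on the completed projective tensor product, define the injective operator $a_\varepsilon(\theta)=((B_\varepsilon)_L(\theta),\varepsilon\theta)$ into $(G_\varepsilon \oplus (E_1\otimes_\pi\cdots\otimes_\pi E_n))_1$, and the contraction $b_\varepsilon \colon \mathrm{Range}(a_\varepsilon) \to F$ with $b_\varepsilon(a_\varepsilon(\theta))=A_L(\theta)$, exactly as in the proof of Theorem \ref{teorema}.

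The pivotal step is the next one: in Theorem \ref{teorema} the argument composes with $i_F$ before extending because $\ell_\infty(B_{F^*})$ is $1$-injective, but under the present hypothesis $F$ is itself injective with some constant $\lambda \geq 1$, so I can extend $b_\varepsilon$ directly to $\widetilde{b_\varepsilon}\colon (G_\varepsilon \oplus (E_1 \otimes_\pi \cdots \otimes_\pi E_n))_1 \longrightarrow F$ with $\|\widetilde{b_\varepsilon}\| \leq \lambda$. Setting $C := \widetilde{b_\varepsilon} \circ j_\varepsilon \circ B_\varepsilon$ with $j_\varepsilon(y)=(y,0)$, the ideal property gives $C \in {\cal M}(E_1,\ldots,E_n;F)$, and the same pointwise computation as in Theorem \ref{teorema} yields $\|A - C\| \leq \lambda \varepsilon$. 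As $\varepsilon$ is arbitrary and $\lambda$ depends only on $F$, it follows that $A \in \overline{{\cal M}(E_1,\ldots,E_n;F)}$. The scalar-valued particular case is then immediate because $\mathbb{K}$ is $1$-injective by the Hahn-Banach theorem.

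The only subtlety I foresee is bookkeeping the injectivity constant: because $\widetilde{b_\varepsilon}$ is no longer contractive, the estimate degrades from $\varepsilon$ to $\lambda\varepsilon$, and I must make sure this factor does not obstruct the conclusion. Since $\lambda$ is fixed by $F$ alone, letting $\varepsilon$ vary freely still places $A$ in the closure, so no new idea beyond those of Theorem \ref{teorema} is needed.
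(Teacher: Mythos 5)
Your argument is correct and is exactly the paper's intended proof: the paper disposes of this corollary with the one-line remark that in the proof of Theorem \ref{teorema} one need not pass to $\ell_\infty(B_{F^*})$ when $F$ is injective, and your write-up simply fleshes that out, including the harmless injectivity constant $\lambda$ and the easy inclusion $\overline{\cal M}\subseteq{^a}\!{\cal M}$.
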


\begin{proposition}\label{envoltoria} Let $\cal M$ be a Banach $n$-ideal. ${\cal M} = {^a}\!{\cal M}$
if and only if $\cal M$ is closed and injective. Furthermore, ${^a}\!{\cal M}$ is the smallest closed
injective $n$-ideal containing ${\cal M}$.
\end{proposition}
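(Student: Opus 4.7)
The plan is to exploit the characterizations of ${^a}\!{\cal M}$ given in Theorem \ref{teorema} and the fact already established in Proposition \ref{proposition} that ${^a}\!{\cal M}$ is always closed, injective, and contains ${\cal M}$. The first equivalence is then a two-line argument: the ``only if'' direction follows at once from Proposition \ref{proposition}, because if ${\cal M}={^a}\!{\cal M}$ then ${\cal M}$ inherits closedness and injectivity from ${^a}\!{\cal M}$.

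For the ``if'' direction I would fix a closed, injective Banach $n$-ideal ${\cal M}$ and show ${^a}\!{\cal M}\subseteq {\cal M}$ (the reverse inclusion is automatic). Given $A\in {^a}\!{\cal M}(E_1,\ldots,E_n;F)$, Theorem \ref{teorema}(b) furnishes, for every $\varepsilon>0$, some $C_\varepsilon\in{\cal M}(E_1,\ldots,E_n;\ell_\infty(B_{F^*}))$ with $\|i_F\circ A-C_\varepsilon\|<\varepsilon$. Thus $i_F\circ A$ lies in the uniform-norm closure of ${\cal M}(E_1,\ldots,E_n;\ell_\infty(B_{F^*}))$; since ${\cal M}$ is closed, $i_F\circ A\in{\cal M}(E_1,\ldots,E_n;\ell_\infty(B_{F^*}))$. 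Because $i_F\colon F\to\ell_\infty(B_{F^*})$ is an isometric embedding and ${\cal M}$ is injective, this forces $A\in{\cal M}(E_1,\ldots,E_n;F)$, as required.

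For the minimality statement I would take any closed injective $n$-ideal ${\cal N}$ with ${\cal M}\subseteq{\cal N}$. By the monotonicity noted in Remark 2.2(a), ${^a}\!{\cal M}\subseteq{^a}\!{\cal N}$; and by the equivalence just proved applied to ${\cal N}$, one has ${^a}\!{\cal N}={\cal N}$. Hence ${^a}\!{\cal M}\subseteq{\cal N}$, and since Proposition \ref{proposition} already guarantees that ${^a}\!{\cal M}$ itself is a closed injective $n$-ideal containing ${\cal M}$, it is the smallest such.

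No step looks genuinely hard: the whole argument reduces to invoking the closure/injectivity hypotheses at the right moment after Theorem \ref{teorema}(b) converts the approximation-by-inequality definition into genuine uniform approximation by ${\cal M}$-mappings into an injective overspace. The only subtlety worth flagging is the systematic use of the fact that, while the approximating $C_\varepsilon$ lives in a potentially larger codomain $\ell_\infty(B_{F^*})$, injectivity of ${\cal M}$ pulls the conclusion back to $F$; this is exactly the feature that makes the characterization (b) the ``right'' reformulation for this proposition.
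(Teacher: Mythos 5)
Your proposal is correct and follows essentially the same route as the paper: Theorem \ref{teorema}(b) plus closedness puts $i_F\circ A$ in ${\cal M}(E_1,\ldots,E_n;\ell_\infty(B_{F^*}))$, injectivity pulls $A$ back to ${\cal M}(E_1,\ldots,E_n;F)$, the converse is Proposition \ref{proposition}, and minimality follows from monotonicity together with ${^a}\!{\cal N}={\cal N}$. No gaps.
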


\begin{proof} Assume that $\cal M$ is closed and injective. Given $A$
in ${^a}\!{\cal M}(E_1, \ldots, E_n;F)$, Theorem \ref{teorema} yields $i_F \circ A \in \overline{{\cal
M}(E_1, \ldots, E_n;\ell_\infty(B_{F^*}))}$. But $\cal M$ is closed, so \linebreak $i_F \circ A \in {\cal
M}(E_1, \ldots, E_n;\ell_\infty(B_{F^*}))$. From the injectivity of $\cal M$ it follows that $A \in {\cal
M}(E_1, \ldots, E_n;F)$, so ${\cal M} = {^a}\!{\cal M}$. The converse follows from the fact that ${^a}\!{\cal
M}$ is closed and injective (Proposition \ref{proposition}). Concerning the last assertion, we know that
${^a}\!{\cal M}$ is a closed injective $n$-ideal containing ${\cal M}$. Let ${\cal N}$ be a closed injective
$n$-ideal containing ${\cal M}$. Then, ${^a}\!{\cal M} \subseteq {^a}\!{\cal N} = {\cal N}$ by the first
assertion.
\end{proof}

We thus have that ${\cal M} \neq {^a}\!{\cal M}$ if $\cal M$ fails either to be closed or to be injective.
Concrete nonlinear examples will be given in both the closed non-injective case (Example
\ref{exemploprimeiro}) and the injective non-closed case (Example \ref{exemplonovo}).

\section{Finite type and compact mappings}

In the linear case, the closed injective hull of the ideal $\mathcal{F}$ of finite rank operators coincides
with the ideal ${\cal K}$ of compact operators, that is $^a \!{\cal F}=\mathcal{K}$ (see \cite[Proposition
19.2.3]{jarchow}). Denoting by ${\cal L}_{\cal K}$ the closed multi-ideal of compact multilinear mappings
(bounded sets are sent onto relatively compact sets), it is obvious to ask if the equality ${^a}\!({\cal
L}_f) = {\cal L}_{\cal K}$ holds true. We begin this section by giving a negative answer.

\begin{example}\rm Let $A\in {\cal L}(^2 \ell_2)$ be given by
$A((x_j)_{j=1}^\infty,(y_j)_{j=1}^\infty) = \sum_{j=1}^\infty x_jy_j$. Considering the canonical unit vectors
we see that $A$ is not weakly sequentially continuous, hence $A$ is not approximable. As $\overline{{\cal
L}_f}$ is closed, by Corollary \ref{corollary} we find that $A \notin {^a}\!(\overline{{\cal L}_f})(^2
\ell_2)$, hence $A \notin {^a}\!({{\cal L}_f})(^2 \ell_2)$. On the other hand, it is obvious that $A \in
{\cal L}_{\cal K}(^2 \ell_2)$.
\end{example}

Once we know that ${^a}\!({\cal L}_f) \neq {\cal L}_{\cal K}$, it is natural to look for another multi-ideal
which generalizes the compact operators and coincides with ${^a}\!({\cal L}_f)$. We will accomplish this task
almost entirely. First we need some terminology.

The notation $E_1, \stackrel{\hat i}{\ldots}, E_n$ means that
$E_i$ is omitted, that is: $(E_1, \stackrel{\hat i}{\ldots}, E_n)
= (E_1, \ldots, E_{i-1}, E_{i+1}, \ldots E_n)$, the same for
$(x_1, \stackrel{\hat i}{\ldots}, x_n)$. For $i = 1, \dots, n$,
consider the isometric isomorphism $I_i \colon {\cal L}(E_1,
\ldots, E_n;F) \longrightarrow {\cal L}{\Large(}E_i; {\cal L}(E_1,
\stackrel{\hat i}{\ldots}, E_n; F){\Large )}$,
\[I_i(A)(x_i)(x_1,  \stackrel{\hat i}{\ldots}, x_n) = A(x_1, \ldots, x_n).\]
For the case $n = 1$ to make sense, we consider $I_1(A) = A$ for $A \in {\cal L}(E;F)$. \\
\indent Given Banach operator ideals ${\cal I}_1, \ldots, {\cal I}_n$, an $n$-linear
mapping $A \in {\cal L}(E_1, \ldots, E_n;F)$ is said to be\\
$\bullet$ {\it of type $[{\cal I}_1, \ldots, {\cal I}_n]$}, and in this case we write $A \in [{\cal I}_1,
\ldots, {\cal I}_n](E_1, \ldots, E_n;F)$, if
\[I_i(A) \in {\cal I}_i{\Large(}E_i; {\cal L}(E_1,  \stackrel{\hat i}{\ldots}, E_n;F)
{\Large )}, {\rm~for~ every~} i = 1, \ldots, n.\] $\bullet$ {\it of type ${\cal L}({\cal I}_1, \ldots, {\cal
I}_n)$}, and in this case we write $A \in {\cal L}({\cal I}_1, \ldots, {\cal I}_n)(E_1, \ldots, E_n;F)$, if
there are Banach spaces $G_1, \ldots, G_n$, linear operators $u_j \in {\cal I}_j(E_j;G_j)$, $j = 1, \ldots,
n$, and $B \in {\cal L}(G_1, \ldots, G_n;F)$ such that $A
= B \circ (u_1, \ldots, u_n)$. If ${\cal I}_1 = \ldots = {\cal I}_n = {\cal I}$ we simply write
$[{\cal I}]$ and ${\cal L}({\cal I})$. \\
\indent It is well known that $[{\cal I}_1, \ldots, {\cal I}_n]$ and ${\cal L}({\cal I}_1, \ldots, {\cal
I}_n)$ are (closed, if ${\cal I}_1, \ldots,{\cal I}_n $ are closed) $n$-ideals (see \cite{note}). It is clear
that ${\cal L}({\cal I}_1, \ldots, {\cal I}_n) \subseteq [{\cal I}_1, \ldots, {\cal I}_n]$. If ${\cal I}_1,
\ldots, {\cal I}_n$ are closed and injective, then ${\cal L}({\cal I}_1, \ldots, {\cal I}_n) = [{\cal I}_1,
\ldots, {\cal I}_n]$ (\cite{bj,gg}). In particular, ${\cal L}({\cal K}) = [{\cal K}]$.

Our next aim is to show that ${^a}\!({\cal L}_f) = [\cal K]$ modulo the approximation property. Recall that
$[\cal K]$ coincides with the class of multilinear mappings which are weakly continuous on bounded sets
\cite{ahv}.

\begin{lemma}\label{lemma} Let ${\cal I}, \ldots, {\cal I}_n$ be operator ideals. If each ${\cal I}_j$
is injective,
then so are ${\cal L}({\cal I}_1, \ldots, {\cal I}_n)$ and $[{\cal I}_1, \ldots, {\cal I}_n]$.
\end{lemma}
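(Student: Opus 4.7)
The plan is to treat the two classes separately, with $[\mathcal{I}_1,\ldots,\mathcal{I}_n]$ being straightforward and $\mathcal{L}(\mathcal{I}_1,\ldots,\mathcal{I}_n)$ requiring a corestriction trick.

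For $[\mathcal{I}_1,\ldots,\mathcal{I}_n]$, I would start from an isometric embedding $i\colon F\longrightarrow H$ with $i\circ A\in [\mathcal{I}_1,\ldots,\mathcal{I}_n](E_1,\ldots,E_n;H)$. The key observation is that post-composition with $i$ induces an isometric embedding
\[
\tilde\imath \colon \mathcal{L}(E_1,\stackrel{\hat j}{\ldots},E_n;F)\longrightarrow \mathcal{L}(E_1,\stackrel{\hat j}{\ldots},E_n;H),\qquad T\longmapsto i\circ T,
\]
and that the linearization identity $I_j(i\circ A) = \tilde\imath\circ I_j(A)$ holds for each $j$. Since $I_j(i\circ A)\in \mathcal{I}_j$ by hypothesis, the injectivity of $\mathcal{I}_j$ applied to the isometric embedding $\tilde\imath$ yields $I_j(A)\in \mathcal{I}_j$ for every $j$, and therefore $A\in [\mathcal{I}_1,\ldots,\mathcal{I}_n]$.

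For $\mathcal{L}(\mathcal{I}_1,\ldots,\mathcal{I}_n)$, suppose $i\circ A = B\circ(u_1,\ldots,u_n)$ with $u_j\in \mathcal{I}_j(E_j;G_j)$ and $B\in \mathcal{L}(G_1,\ldots,G_n;H)$. The plan is to shrink both the intermediate spaces $G_j$ and the codomain of $B$ until a factorization through $F$ becomes visible. Let $G_j':=\overline{u_j(E_j)}$ and write $u_j = j_j\circ u_j'$, where $u_j'\colon E_j\longrightarrow G_j'$ is the corestriction and $j_j\colon G_j'\hookrightarrow G_j$ is the canonical isometric embedding. Injectivity of $\mathcal{I}_j$ forces $u_j'\in \mathcal{I}_j(E_j;G_j')$. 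Put $B':=B\circ(j_1,\ldots,j_n)\in \mathcal{L}(G_1',\ldots,G_n';H)$, so that $i\circ A = B'\circ (u_1',\ldots,u_n')$.

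The decisive step is now to show that the range of $B'$ actually lies in $i(F)$. On the dense subset $u_1'(E_1)\times\cdots\times u_n'(E_n)$ of $G_1'\times\cdots\times G_n'$, the values of $B'$ are exactly the values of $i\circ A$, hence lie in $i(F)$; since $F$ is complete, $i(F)$ is closed in $H$, and the separate continuity (or even joint continuity) of $B'$ together with the density of $u_j'(E_j)$ in $G_j'$ extends this containment to all of $G_1'\times\cdots\times G_n'$. Then $\tilde B := i^{-1}\circ B'\colon G_1'\times\cdots\times G_n'\longrightarrow F$ is a well-defined continuous $n$-linear map, and $A = \tilde B\circ (u_1',\ldots,u_n')$ is the desired factorization, proving $A\in \mathcal{L}(\mathcal{I}_1,\ldots,\mathcal{I}_n)$.

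The main obstacle is precisely this corestriction argument for $\mathcal{L}(\mathcal{I}_1,\ldots,\mathcal{I}_n)$: one needs to pass from the factorization of $i\circ A$, whose middle map $B$ may produce values outside $i(F)$ at points outside the image of $(u_1,\ldots,u_n)$, to a factorization of $A$. The trick of replacing each $G_j$ by $\overline{u_j(E_j)}$ is exactly what forces the obstruction to vanish, since on the closure of the product of these ranges the continuous map $B'$ is controlled by its values on the dense set where it coincides with $i\circ A$.
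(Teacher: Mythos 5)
Your proof is correct and follows essentially the same route as the paper: the $[\mathcal{I}_1,\ldots,\mathcal{I}_n]$ part is identical (post-composition with $i$ is an isometric embedding commuting with the $I_j$'s), and for $\mathcal{L}(\mathcal{I}_1,\ldots,\mathcal{I}_n)$ the paper uses the very same corestriction of each $u_j$ to $\overline{u_j(E_j)}$. The only cosmetic difference is in the last step: the paper defines the new middle map directly by $C(u_1(x_1),\ldots,u_n(x_n))=A(x_1,\ldots,x_n)$ and extends by density, whereas you obtain it as $i^{-1}\circ B\circ(j_1,\ldots,j_n)$ after checking its values stay in the closed subspace $i(F)$ — which conveniently makes well-definedness and boundedness automatic.
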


\begin{proof} Let $A \in {\cal L}(E_1, \ldots, E_n;F)$ and
$i \colon F \longrightarrow G$ be an isometric embedding. If $i
\circ A \in {\cal L}({\cal I}_1, \ldots, {\cal I}_n)(E_1, \ldots,
E_n;G)$, we can find $G_1, \ldots, G_n$, $u_j \in {\cal
I}_j(E_j;G_j)$, $j = 1, \ldots, n$, and $B \in {\cal L}(G_1,
\ldots, G_n;G)$ such that $i \circ A = B \circ (u_1, \ldots,
u_n)$. For $j = 1, \ldots, n$, define $u_j^R \colon E_j
\longrightarrow \overline{{\rm Range}(u_j)}$ by $u_j^R(x_j) =
u_j(x_j)$, and let $i_j \colon \overline{{\rm Range}(u_j)}
\longrightarrow G_j$ be the formal inclusion. Thus %\linebreak
$i_j \circ u_j^R = u_j$ belongs to ${\cal I}_j$. The injectivity
of ${\cal I}_j$ yields that each $u_j^R$ belongs to ${\cal I}_j$.
Define $C \in {\cal L}({\rm Range}(u_1), \ldots, {\rm
Range}(u_n);F)$ by $C(u_1(x), \ldots, u_n(x)) = A(x_1, \ldots,
x_n)$ and extend it continuously to a $\overline{ C} \in {\cal
L}(\overline{{\rm Range}(u_1)}, \ldots,\overline{{\rm
Range}(u_n)};F)$. So $A = \overline{ C} \circ (u_1^R, \ldots,
u_n^R)$, which shows that $A
\in {\cal L}({\cal I}_1, \ldots, {\cal I}_n)(E_1, \ldots, E_n;F)$.\\
\indent Suppose now that $i \circ A \in [{\cal I}_1, \ldots, {\cal I}_n](E_1, \ldots, E_n;G)$. Let $j \in
\{1, \ldots, n\}$. We know that $I_j(i \circ A) \in {\cal I}_j{\Large(}E_j; {\cal L}(E_1, \stackrel{\hat
j}{\ldots}, E_n;G){\Large )}$. Defining
$$J_j \colon {\cal L}(E_1,
\stackrel{\hat j}{\ldots}, E_n;F) \longrightarrow {\cal L}(E_1, \stackrel{\hat j}{\ldots}, E_n;G)~,~J_j(B) =
i \circ B,$$ it is clear that $J$ is an isometric embedding. For $x_j \in E_j$ and $(x_1, \stackrel{\hat
i}{\ldots}, x_n) \in E_1 \times \stackrel{\hat i}{\cdots} \times E_n,$ we have
\begin{eqnarray*}
[(J_j \circ I_j(A))(x_j)](x_1, \stackrel{\hat i}{\ldots}, x_n) & =
& [J_j ( I_j(A)(x_j))](x_1, \stackrel{\hat i}{\ldots}, x_n)\\
& =
& [i \circ ( I_j(A)(x_j))](x_1, \stackrel{\hat i}{\ldots}, x_n)\\
& =
& i ( ( I_j(A)(x_j))(x_1, \stackrel{\hat i}{\ldots}, x_n))\\
& =
& i ( A(x_1, \ldots, x_n))\\
& =
& (i \circ A)(x_1, \ldots, x_n)\\
& = & [(I_j(i \circ A))(x_j)](x_1, \stackrel{\hat i}{\ldots},
x_n).
\end{eqnarray*}
This shows that $J_j \circ I_j(A) = I_j(i \circ A)$. We have ${\cal I}_j$ injective, $J_j \circ I_j(A) \in
{\cal I}_j$ and $J_j$ is an isometric embedding. It follows that $I_j(A)$ belongs to ${\cal I}_j$, proving
that $A \in [{\cal I}_1, \ldots, {\cal I}_n](E_1, \ldots, E_n;F)$.
\end{proof}

\begin{proposition}\label{approximation} Suppose that $E_1^*, \ldots, E_n^*$ have the approximation
property. Then ${^a}\!({\cal L}_f)(E_1, \ldots, E_n;F)) = [{\cal K}](E_1, \ldots, E_n;F)$.
\end{proposition}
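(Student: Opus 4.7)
The plan is to prove the two inclusions separately, using the results already established in the paper together with one classical fact about the approximation property.

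\textbf{Inclusion} $^{a}\!({\cal L}_f) \subseteq [{\cal K}]$. This direction does not use the hypothesis on the duals. First I would observe that every finite type mapping $\varphi_1 \otimes \cdots \otimes \varphi_n \otimes b$ belongs to $[{\cal K}]$, since its associates $I_i(A)$ have rank one. Thus ${\cal L}_f \subseteq [{\cal K}]$. Next I would note that $[{\cal K}]$ is closed (because $\cal K$ is closed, so each $I_i(A)$ lying in $\cal K$ is a closed condition) and injective (by Lemma~\ref{lemma}, since $\cal K$ is injective). By Proposition~\ref{envoltoria}, $[{\cal K}] = \,^a\!([{\cal K}])$, and by monotonicity of the operation $\cal M \mapsto \,^a\!\cal M$ (Remark after Definition 2.1) one gets $^{a}\!({\cal L}_f) \subseteq \,^a\!([{\cal K}]) = [{\cal K}]$.

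\textbf{Inclusion} $[{\cal K}] \subseteq \,^{a}\!({\cal L}_f)$. Since $\cal K$ is closed and injective, the comment preceding Lemma~\ref{lemma} gives $[{\cal K}] = {\cal L}({\cal K})$. Given $A \in [{\cal K}](E_1, \ldots, E_n; F)$, I factor
\[A = B \circ (u_1, \ldots, u_n),\quad u_j \in {\cal K}(E_j; G_j),\ B \in {\cal L}(G_1, \ldots, G_n; F).\]
Now I invoke the classical fact (Grothendieck) that if $E_j^*$ has the approximation property, then every compact operator with domain $E_j$ can be approximated in operator norm by finite rank operators. Hence for each $\varepsilon>0$ there exist finite rank operators $u_j^\varepsilon \in {\cal F}(E_j; G_j)$ with $\|u_j - u_j^\varepsilon\|$ as small as we wish.

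Writing $u_j^\varepsilon = \sum_{\ell=1}^{m_j} \varphi_{j,\ell} \otimes b_{j,\ell}$, the composition $B \circ (u_1^\varepsilon, \ldots, u_n^\varepsilon)$ expands as
\[\sum_{\ell_1, \ldots, \ell_n} \varphi_{1,\ell_1}(x_1)\cdots\varphi_{n,\ell_n}(x_n)\, B(b_{1,\ell_1}, \ldots, b_{n,\ell_n}),\]
which is a mapping of finite type. A standard telescoping estimate, using multilinearity of $B$, shows
\[\|B \circ (u_1, \ldots, u_n) - B\circ (u_1^\varepsilon, \ldots, u_n^\varepsilon)\| \leq \|B\| \sum_{j=1}^n \|u_j - u_j^\varepsilon\| \prod_{i\neq j} \max(\|u_i\|, \|u_i^\varepsilon\|),\]
which tends to $0$. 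Thus $A \in \overline{{\cal L}_f}(E_1, \ldots, E_n; F)$. Since $\overline{\cal M} \subseteq \,^{a}\!{\cal M}$ always holds (noted in the proof of Theorem~\ref{teorema}, (c)$\Rightarrow$(a)), we conclude $A \in \,^{a}\!({\cal L}_f)(E_1, \ldots, E_n; F)$.

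The only nontrivial ingredient is the classical approximation-property fact invoked above; once it is in place, the rest is a clean combination of Lemma~\ref{lemma}, Proposition~\ref{envoltoria}, and the identity $[{\cal K}] = {\cal L}({\cal K})$. I would expect the main subtlety to lie in making sure the telescoping estimate is done cleanly despite the possible growth of $\|u_j^\varepsilon\|$, but taking $\varepsilon$ small enough so that $\|u_j^\varepsilon\| \leq \|u_j\|+1$ is enough to bound the products uniformly.
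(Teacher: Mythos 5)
Your proof is correct, and the first inclusion $^{a}\!({\cal L}_f) \subseteq [{\cal K}]$ is exactly the paper's argument (closedness of $[{\cal K}]$, injectivity via Lemma~\ref{lemma}, then Proposition~\ref{envoltoria} and monotonicity). For the reverse inclusion, however, you take a genuinely different and more self-contained route. The paper simply quotes two results of Aron, Herv\'es and Valdivia: that $[{\cal K}]$ coincides with the mappings weakly continuous on bounded sets, and that under the approximation property of the duals this class equals $\overline{{\cal L}_f}$; the inclusion $\overline{{\cal L}_f} \subseteq {^a}\!({\cal L}_f)$ then finishes. You instead reprove the needed identity $[{\cal K}](E_1,\ldots,E_n;F) \subseteq \overline{{\cal L}_f}(E_1,\ldots,E_n;F)$ from scratch: you use the factorization $[{\cal K}] = {\cal L}({\cal K})$ (stated in the paper before Lemma~\ref{lemma}), write $A = B\circ(u_1,\ldots,u_n)$ with each $u_j$ compact, and invoke Grothendieck's characterization that $E_j^*$ having the approximation property forces ${\cal K}(E_j;G_j) = \overline{{\cal F}(E_j;G_j)}$; the telescoping estimate and the observation that $B\circ(u_1^\varepsilon,\ldots,u_n^\varepsilon)$ is of finite type are both sound, and your care with the bound $\|u_j^\varepsilon\|\le\|u_j\|+1$ is exactly the right precaution. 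What the paper's route buys is brevity (two citations and done); what yours buys is transparency --- it makes visible precisely where the hypothesis on the duals enters, and it avoids the detour through weak continuity on bounded sets entirely. Both are valid proofs of the proposition.
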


\begin{proof} It is well known that $[\cal K]$ is a closed (because
$\cal K$ is closed) multi-ideal. It is injective by Lemma \ref{lemma} as $\cal K$ is injective, hence $[{\cal
K}] = {^a}\![{\cal K}]$ by Proposition \ref{envoltoria}. From ${\cal L}_f \subseteq [{\cal K}]$ we conclude
that ${^a}\!({\cal L}_f) \subseteq {^a}\![{\cal K}] = [{\cal K}]$. As mentioned earlier, $[{\cal K}]$
coincides with the class of multilinear mappings which are weakly continuous on bounded sets \cite[Theorem
2.9]{ahv}. Supposing that $E_1^*, \ldots, E_n^*$ have the approximation property, \cite[Corollary 2.11]{ahv}
gives $[{\cal K}](E_1, \ldots, E_n;F) = \overline{{\cal L}_f}(E_1, \ldots, E_n;F)$. The proof is complete as
$\overline{{\cal L}_f}
 \subseteq {^a}\!({\cal L}_f)$.
\end{proof}

Next example shows that Proposition \ref{approximation} does not
hold true without the approximation property.

\begin{example}\label{exemplo}\rm Examining the proof of
\cite[Theorem 4.5]{acg} we find a Banach space $E$ lacking the approximation property and a compact symmetric
non-approximable linear operator $u \colon E \longrightarrow E^*$. Defining $A \in {\cal L}(^2E)$ by $A(x,y)
= u(x)(y)$ it is immediate that $A \in [{\cal K}](^2E)$ as $I_1(A) = I_2(A) = u$. Suppose that $A \in
\overline{{\cal L}_f}(^2E)$. Given $\varepsilon > 0$, there is $B \in {\cal L}(^2E)$ of finite type such that
$\|A - B\| < \varepsilon$. Say $B = \sum_{j=1}^k\varphi_i \psi_i$, $\varphi_1, \ldots, \varphi_k, \psi_1,
\ldots, \psi_k \in E^*$. Defining $v \in {\cal L}(E; E^*)$ by $v(x) = \sum_{j=1}^k\varphi_i(x) \psi_i$ we
have that $v$ is a finite rank operator. Furthermore,
$$\|u(x)(y) - v(x)(y)\| = \|A(x,y) - B(x,y)\| \leq \|A -
B\|\|x\|\|y\|< \varepsilon \|x\|\|y\| ,$$ for every $x,y \in E$. It results that $\|u - v\| \leq
\varepsilon$, which shows that $u$ is approximable, a contradiction. So, $A \notin \overline{{\cal
L}_f}(^2E)$, and by Corollary \ref{corollary} we have $A \notin {^a}\!({\cal L}_f)(^2E)$.
\end{example}

Thus far we know that ${^a}\!({\cal L}_f) = \overline{{\cal L}_f}$ if either the range space is injective
(Corollary \ref{corollary}) or the duals of the domain spaces have the approximation property (Proposition
\ref{approximation}). %Denoting the closed ideal of approximable operators by $\cal A$,
In the linear case, we have already mentioned that ${^a}\!{\cal F} = {\cal K}$, so any compact
non-approximable linear operator assures that ${^a}\!{\cal F} \neq \overline{\cal F}$. Let us see a nonlinear
example.

\begin{example}\label{exemploprimeiro}\rm Let $u \colon E \longrightarrow F$ be a compact
non-approximable linear operator, for example the operator from Example \ref{exemplo} (actually, for every
Banach space $E$ without the approximation property there is a Banach space $F$ and a compact
non-approximable operator $u$ from $E$ to $F$). Fix $\varphi \in E^*$, $\|\varphi\| = 1$, and $a \in E$ with
$\varphi(a) = 1$. Define
$$A \colon E \times E \longrightarrow F ~,~A(x,y) =
\varphi(x)u(y).$$ Suppose that $A$ belongs to $\overline{{\cal L}_f}$. Given $\varepsilon >0$, there are
$\varphi_1, \ldots, \varphi_k, \psi_1, \ldots, \psi_k \in E^*$ and $b, \ldots, b_k \in F$ such that $\|A -
\sum_{j=1}^k \varphi_j \psi_j b_j\| < \frac{\varepsilon}{\|a\|}$. For every $y \in E$ we have
$$\left \|u(y) - \sum_{j=1}^k \varphi_j(a) \psi_j(y) b_j \right
\| = \left \|A(a,y) - \sum_{j=1}^k \varphi_j(a) \psi_j(y) b_j
\right \|< \frac{\varepsilon}{\|a\|} \|a\|\|y\|,$$ resulting $\|u
- \sum_{j=1}^k \varphi_j(a) \psi_j b_j\| \leq \varepsilon$. But
$\sum_{j=1}^k \varphi_j(a) \psi_j b_j$ is a finite rank operator,
so $u$ is approximable, a contradiction. Hence $A \notin
\overline{{\cal L}_f}(^2E;F)$. In order to show that $A \in
{^a}\!({\cal L}_f)(^2E;F)$, let $\varepsilon >0$. Since $u$ is
compact, by \cite[Proposition 19.2.3]{jarchow} there is a finite
rank operator $v \colon E \longrightarrow \ell_\infty(B_{F^*})$
such that $\|i_F \circ u - v\| < \frac{\varepsilon}{\|\varphi\|}$.
Defining $B \in {\cal L}(^2E ;\ell_\infty(B_{F^*}))$ by $B(x,y) =
\varphi(x)v(y)$ we have that $B$ is of finite type and
\begin{eqnarray*} \|i_F \circ A(x,y) - B(x,y)\| & = & \|i_F( \varphi(x)u(y)) -
\varphi(x)v(y)\|\\
& =  &|\varphi(x)|\|i_F(u(y)) - v(y)\|\\
& \leq & \|\varphi\|\|i_F \circ u - v\|\|x\|\|y\|\\
& < & \varepsilon\|x\|\|y\|,
\end{eqnarray*}
for every $x,y \in E$. It follows that $\|i_F \circ A - B\| \leq \varepsilon$, so by Theorem \ref{teorema} we
have that $A \in {^a}\!({\cal L}_f)(^2E;F)$.
 \end{example}

We have seen that, contrary to the linear case, $[{\cal K}] \not\subseteq {^a}\!({\cal L}_f)$. Next we show
that this is caused by the fact that multilinear forms are not always of finite type. By ${\cal
L}_{\phi}(E_1, \ldots, E_n;F)$ we denote the subspace of ${\cal L}(E_1, \ldots, E_n;F)$ spanned by the
mappings of the form $A(x_1, \ldots, x_n) = B(x_1, \ldots, x_n)b$ where $B \in {\cal L}(E_1, \ldots, E_n)$
and $b \in F$.

\begin{proposition} If $A \in [{\cal K}](E_1, \ldots, E_n;F)$,
then for every $\varepsilon > 0$ there is an $n$-linear mapping $C \in {\cal L}_{\phi}(E_1, \ldots,
E_n;\ell_\infty(B_{F^*}))$ such that $\|i_F \circ A - C\| < \varepsilon$.
\end{proposition}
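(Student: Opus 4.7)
The plan is to reduce the proposition to the linear Jarchow--Pe{\l}czy\'nski theorem by linearizing everything on the completed $n$-fold projective tensor product. Since $\mathcal{K}$ is a closed and injective operator ideal, the paragraph preceding the proposition gives $[\mathcal{K}] = \mathcal{L}(\mathcal{K})$, so I will first factor $A = B \circ (u_1, \ldots, u_n)$ with $u_j \in \mathcal{K}(E_j; G_j)$ and $B \in \mathcal{L}(G_1, \ldots, G_n; F)$.

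Next I pass to linearizations. The factorization yields $A_L = B_L \circ (u_1 \widehat\otimes_\pi \cdots \widehat\otimes_\pi u_n)$ as operators on $E_1 \widehat\otimes_\pi \cdots \widehat\otimes_\pi E_n$. The main technical point is to observe that the completed projective tensor product of compact operators is compact: the image of the unit ball of $E_1 \widehat\otimes_\pi \cdots \widehat\otimes_\pi E_n$ under $u_1 \widehat\otimes_\pi \cdots \widehat\otimes_\pi u_n$ is contained in the closed absolutely convex hull of $\{u_1(x_1) \otimes \cdots \otimes u_n(x_n) : \|x_j\| \leq 1\}$, a relatively compact set in $G_1 \widehat\otimes_\pi \cdots \widehat\otimes_\pi G_n$ (continuous image of a compact product, whose closed absolutely convex hull is compact in a Banach space). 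Hence $A_L$ is compact from $E_1 \widehat\otimes_\pi \cdots \widehat\otimes_\pi E_n$ into $F$.

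Now I invoke the linear Jarchow--Pe{\l}czy\'nski result \cite[Proposition 19.2.3]{jarchow}: given $\varepsilon > 0$ there is a finite rank operator $v \colon E_1 \widehat\otimes_\pi \cdots \widehat\otimes_\pi E_n \longrightarrow \ell_\infty(B_{F^*})$ with $\|i_F \circ A_L - v\| < \varepsilon$. Write $v(\cdot) = \sum_{j=1}^m \lambda_j(\cdot) b_j$ with $\lambda_j \in (E_1 \widehat\otimes_\pi \cdots \widehat\otimes_\pi E_n)^* \cong \mathcal{L}(E_1, \ldots, E_n)$ and $b_j \in \ell_\infty(B_{F^*})$. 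Each $\lambda_j$ is the linearization of a unique scalar $n$-linear form $B_j \in \mathcal{L}(E_1, \ldots, E_n)$, and setting $C(x_1, \ldots, x_n) := \sum_{j=1}^m B_j(x_1, \ldots, x_n) b_j$ produces $C \in \mathcal{L}_\phi(E_1, \ldots, E_n; \ell_\infty(B_{F^*}))$ whose linearization on the projective tensor product is precisely $v$.

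To close, I use that the multilinear sup norm equals the norm of the linearization on $E_1 \widehat\otimes_\pi \cdots \widehat\otimes_\pi E_n$: this gives $\|i_F \circ A - C\| = \|(i_F \circ A)_L - C_L\| = \|i_F \circ A_L - v\| < \varepsilon$, as required. The only nontrivial ingredient is the compactness of the projective tensor product of compact operators; everything else is routine translation between an $n$-linear map and its linearization and a direct application of the linear theorem.
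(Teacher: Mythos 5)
Your proof is correct and follows essentially the same route as the paper: factor $A = B\circ(u_1,\dots,u_n)$ via $[\mathcal K]=\mathcal L(\mathcal K)$, observe that $u_1\otimes\cdots\otimes u_n$ (hence $A_L$) is compact on the projective tensor product, and approximate the compact linearization by finite rank operators into $\ell_\infty(B_{F^*})$ before translating back to an element of $\mathcal L_\phi$. The only cosmetic differences are that you prove the compactness of the tensor product of compact operators directly (the paper cites K\"othe) and invoke \cite[Proposition 19.2.3]{jarchow} where the paper appeals to the approximation property of $\ell_\infty(B_{F^*})$ --- two equivalent ways of getting the same finite rank approximation.
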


\begin{proof} Since ${\cal L}({\cal K}) = [{\cal K}]$, let $G_1,
\ldots, G_n$, $u_j \in {\cal K}(E_j;G_j)$ and $B \in {\cal L}(G_1,
\ldots, G_n;F)$ be such that $A = B \circ (u_1, \ldots, u_n)$. It
follows that $u_1 \otimes \cdots \otimes u_n$ is a compact
operator from $E_1 \widehat\otimes_\pi \cdots \widehat\otimes_\pi
E_n$ to $G_1 \widehat\otimes_\pi \cdots \widehat\otimes_\pi G_n$
(see \cite{k} 44.6.1). So, $i_F \circ B_L \circ u_1 \otimes \cdots
\otimes u_n$ is a compact operator from $E_1 \widehat\otimes_\pi
\cdots \widehat\otimes_\pi E_n$ to $\ell_\infty(B_{F^*})$. The
latter space has the approximation property, hence there are
$\varphi_1, \ldots, \varphi_k \in (E_1 \widehat\otimes_\pi \cdots
\widehat\otimes_\pi E_n)^* $ and $b_1, \ldots, b_k \in
\ell_\infty(B_{F^*})$ such that
$$\left \| i_F \circ B_L \circ u_1 \otimes \cdots \otimes u_n - \sum_{j=1}^k
 \varphi_jb_j \right\| < \varepsilon.$$
For $j = 1, \ldots, k$, take $B_j \in {\cal L}(E_1, \ldots, E_n)$
such that $(B_j)_L = \varphi_j$. For $(x_1, \ldots, x_n) \in  E_1
\times \cdots \times E_n$ it is clear that $B_L \circ u_1 \otimes
\cdots \otimes u_n(x_1 \otimes \cdots \otimes x_n) = B(u_1(x_1),
\ldots, u_n(x_n) = A(x_1, \ldots, x_n)$, so $\| i_F \circ A -
\sum_{j=1}^k  B_jb_j \| < \varepsilon$.
\end{proof}

\section{Weakly compact and absolutely summing mappings}

By ${\cal W}$ and ${\cal L}_{\cal W}$ we mean the ideals of weakly
compact linear operators and multilinear mappings respectively
(bounded sets are sent onto relatively weakly compact sets) and by
$\Pi_p$ the ideal of absolutely $p$-summing linear operators. In
this section we investigate multilinear counterparts of properties
of ${^a}\Pi_p$ and their connections with $\cal W$. The ideal
$\Pi_p$ has been generalized to the multilinear setting in several
ways, and among the most studied ones we find the multi-ideal of
dominated mappings:

\begin{definition}\rm Let $p_1, \ldots, p_n \geq 1$. An $n$-linear mapping $A \in {\cal L}(E_1, \ldots, E_n;F)$ is
$(p_1, \ldots, p_n)$-\textit{dominated} if there is a constant $C
\geq 0$ such that
$$\left( \sum_{j=1}^k\|A(x_j^1, \ldots, x_j^n)\|^r\right)^\frac{1}{r} \leq C \prod_{i=1}^n\sup_{\varphi
\in B_{E_i^*}}\left(\sum_{j=1}^k|\varphi(x_j^i)|^{p_i}
\right)^\frac{1}{p_i},$$ where $\frac{1}{r} = \frac{1}{p_1}+
\cdots +\frac{1}{p_n}$, for every $k \in \mathbb{N}$ and any
$x_j^i \in E_i$, $j= 1, \ldots, k$, $i =1, \ldots, n$. In this
case we write $A \in {\cal L}_{d;p_1, \ldots,p_n}(E_1, \ldots,
E_n;F)$. Denoting the infimum of the constants $C$ working in the
inequality by $\|A\|_{d;p_1, \ldots, p_n}$ we have that $({\cal
L}_{d;p_1, \ldots,p_n},\|\cdot\|_{d;p_1, \ldots, p_n})$ is a
complete $r$-normed $n$-ideal. If $p_1 = \cdots = p_n = p$ we say
that $A$ is $p$-dominated and write $A \in {\cal L}_{d;p}(E_1,
\ldots, E_n;F)$.
\end{definition}

%The following characterization, which goes back to \cite{Pietsch} (a detailed proof can be found in
%\cite[Corolario 3.23]{david}), shall be useful:
%
%\begin{theorem}\label{teoremadominada} ${\cal L}_{d;p_1, \ldots,p_n} = {\cal L}(\Pi_{p_1}, \ldots, \Pi_{p_n})$.
%\end{theorem}

%The terminology {\it dominated} is justified by the Pietsch-type domination theorem these mappings enjoy (see
%\cite[Theorem 3.2]{matoscomplutense} or simply combine Pietsch's domination theorem for absolutely summing
%operators with Theorem \ref{teoremadominada}).
Before going into the main results of the section we provide the
announced nonlinear examples of mappings in ${^a}\!{\cal M}$ but
not in $\cal M$ for injective non-closed $\cal M$. The
characterization
\begin{equation}{\cal L}_{d;p_1, \ldots,p_n} = {\cal L}(\Pi_{p_1}, \ldots,
\Pi_{p_n}),
\end{equation}
 which goes back to \cite{Pietsch}
(a detailed proof can be found in \cite[Corolario 3.23]{david}),
shall be useful several times.

\begin{example}\label{exemplonovo}\rm Since multilinear forms on $c_0$ are approximable we have that
${\cal L}(^nc_0)= {^a}({\cal L}_{d,p})(^nc_0)$ for every $p \geq
n$. On the other hand, ${\cal L}(^nc_0)\neq {\cal L}_{d,p}(^nc_0)$
for $n \geq 3$ and $p \geq 1$ by \cite[Theorem 3.5]{bote97}. So
${\cal L}_{d,p}(^nc_0) \neq {^a}({\cal L}_{d,p})(^nc_0)$ for $p
\geq n \geq 3$. Of course this is an implicit example. Let us see
an explicit one: let $u \in {\cal L}(E;F)$ be a $3$-summing
non-2-summing linear operator (for example the canonical map $j_3
\colon C[0,1] \longrightarrow L_3[0,1]$). Fix $\varphi \in E^*$,
$\|\varphi\| = 1$, and define $A \in {\cal L}(^2E;F)$ by $A(x,y) =
\varphi(x)u(y).$ Since $u$ is 3-summing, $u$ is absolutely
continuous, so by \cite[page 311]{djt} there are a Banach space
$G$ and a 2-summing operator $j \colon E \longrightarrow G$ such
that for every $\varepsilon > 0$ there is $K_{\varepsilon}$ such
that
$$\|u(x)\| \leq K_{\varepsilon}\|j(x)\| + \varepsilon \|x\| {\rm ~for~every~}x \in E.$$
Define $B \in {\cal L}(^2E;G)$ by $B(x,y) = \varphi(x)j(y)$. Since
$B = C \circ (\varphi, j)$, where $C(\lambda, y) = \lambda y$, it
follows from (*) that $B$ is 2-dominated. From
\begin{eqnarray*} \left \| \sum_{i=1}^k A(x_i,y_i)\right \| \!\!& =  &\!\! \left
 \| u \left(\sum_{i=1}^k \varphi(x_i)y_i \right)\right \| \leq K_{\varepsilon}
 \left\|j\left(\sum_{i=1}^k \varphi(x_i)y_i \right) \right\|
  + \varepsilon \left\|\sum_{i=1}^k \varphi(x_i)y_i \right\|\\
   \!\!& \leq  &\!\! K_{\varepsilon}\left
 \| \sum_{i=1}^k B(x_i,y_i) \right \|
  + \varepsilon \sum_{i=1}^k \|x_i\|\|y_i\|,
  \end{eqnarray*}
we conclude that $A \in {^a}({\cal L}_{d,2})(^2E;F)$. Suppose that
$A$ is 2-dominated. By (*) $A$ can be written as $A = C \circ
(v_1, v_2)$ with $v_1, v_2$ being 2-summing. Choosing $a \in E$
with $\varphi(a) = 1$, for every $x\in E$,
$$u(x) = \varphi(a)u(x) = A(a,x) = C(v_1(a), v_2(x)) = (C\circ(v_1(a),\cdot)\circ v_2)(x), $$
resulting $u = (C\circ(v_1(a),\cdot))\circ v_2$. This is absurd
because $v_2$ is 2-summing whereas $u$ is not, so $A$ fails to be
2-dominated.
\end{example}

%It is well known (see \cite[Theorem 15.3]{djt}) that
Recall that ${^a}{\Pi_p} = {\cal H},$ the ideal of absolutely continuous operators.
 Since every absolutely continuous operator is
weakly compact and completely continuous \cite[Corollary 15.4]{djt}, it is natural to wonder whether every
multilinear mapping belonging to ${^a}\!{\cal L}_{d;p_1, \ldots, p_n}$ is (a) weakly compact and/or (b)
weakly sequentially continuous (for multilinear mappings the literature speaks of weakly sequentially
continuous mappings rather than completely continuous mappings).

Since there are $p$-dominated non-weakly compact $n$-linear
mappings \cite[Example 1]{jmaa}, we have $ {\cal L}_{d,p}
\subseteq {^a}\!{\cal L}_{d,p} \not\subseteq {\cal L}_{\cal W},$
so (a) does not hold in general. This leads us to consider a more
suitable standard multilinear generalization of weakly compact
operators: the ideal of Arens-regular multilinear mappings $[{\cal
W}].$

\begin{proposition} Every multilinear mapping belonging to ${^a}\!{\cal L}_{d;p_1, \ldots, p_n}$ is weakly
sequentially continuous and Arens-regular.
\end{proposition}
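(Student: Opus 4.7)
The plan is to transfer the two properties from a suitable approximating dominated mapping to $A$ itself, exploiting the fact that every $(p_1,\ldots,p_n)$-dominated mapping is already weakly sequentially continuous and Arens-regular. Indeed, if $B\in{\cal L}_{d;p_1,\ldots,p_n}$, then by $(*)$ we may write $B=D\circ(u_1,\ldots,u_n)$ with each $u_j\in\Pi_{p_j}$; since $p_j$-summing operators are both completely continuous and weakly compact, $B$ is weakly sequentially continuous (apply complete continuity of each $u_j$ and the continuity of $D$), and by the factorization argument already used in Lemma \ref{lemma} one obtains $I_j(B)=\tau\circ I_j(D)\circ u_j$ for a suitable bounded $\tau$, so $I_j(B)\in{\cal W}$ and hence $B\in[{\cal W}]$.

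For the weak sequential continuity of $A$, pick $B\in{\cal L}_{d;p_1,\ldots,p_n}(E_1,\ldots,E_n;G)$ and $K_\varepsilon>0$ witnessing $A\in{^a}{\cal L}_{d;p_1,\ldots,p_n}$. Specializing the defining inequality of $^a\!{\cal M}$ to $k=1$ yields
\[
\|A(y^1,\ldots,y^n)\|\le K_\varepsilon\|B(y^1,\ldots,y^n)\|+\varepsilon\|y^1\|\cdots\|y^n\|.
\]
Assuming $x_m^j\rightharpoonup x^j$ in $E_j$ for each $j$, I would apply the standard telescoping identity
\[
A(x_m^1,\ldots,x_m^n)-A(x^1,\ldots,x^n)=\sum_{j=1}^{n}A(x^1,\ldots,x^{j-1},x_m^j-x^j,x_m^{j+1},\ldots,x_m^n)
\]
and bound each summand via the pointwise inequality above. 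Since $B$ is weakly sequentially continuous and $x_m^j-x^j\rightharpoonup 0$, the $B$-value of each summand tends to zero in norm as $m\to\infty$; the remaining error is $\varepsilon$ times a uniformly bounded product of norms, the boundedness coming from the boundedness of weakly convergent sequences. Taking $\limsup_m$ and then letting $\varepsilon\to 0$ gives $A(x_m^1,\ldots,x_m^n)\to A(x^1,\ldots,x^n)$ in norm.

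For the Arens-regularity of $A$, equivalently $A\in[{\cal W}]$, I would invoke Theorem \ref{teorema} to obtain, for every $\varepsilon>0$, some $C_\varepsilon\in{\cal L}_{d;p_1,\ldots,p_n}(E_1,\ldots,E_n;\ell_\infty(B_{F^*}))$ with $\|i_F\circ A-C_\varepsilon\|<\varepsilon$. Since $C_\varepsilon$ is dominated, the first paragraph gives $I_j(C_\varepsilon)\in{\cal W}$ for every $j$; as $I_j$ is an isometric isomorphism and ${\cal W}$ is closed, $I_j(i_F\circ A)\in{\cal W}$. Writing $I_j(i_F\circ A)=J_j\circ I_j(A)$ with $J_j(D):=i_F\circ D$ an isometric embedding (exactly as computed in the proof of Lemma \ref{lemma}), the injectivity of ${\cal W}$ then forces $I_j(A)\in{\cal W}$, so $A\in[{\cal W}]$. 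The main obstacle is the clean verification that dominated mappings satisfy both properties; I would handle it by making the factorization from $(*)$ explicit and citing the classical facts that $p$-summing operators are completely continuous and weakly compact.
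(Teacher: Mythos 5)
Your proof is correct and follows essentially the same strategy as the paper's: show via the factorization $(*)$ that dominated mappings are weakly sequentially continuous and belong to $[{\cal W}]$, then transfer both properties to ${^a}$-approximable mappings using the approximation characterization together with the closedness and injectivity of the target classes. The only cosmetic difference is that you verify the weak sequential continuity of $A$ by a direct telescoping estimate from the $k=1$ defining inequality, whereas the paper invokes the norm-closedness of the class of weakly sequentially continuous mappings applied to $i_F\circ A$; both steps are valid.
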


\begin{proof} %Given $A \in {\cal L}_{d;p_1, \ldots,
%p_n}(E_1, \ldots, E_n;F)$, applying (*) once more we can write $A
%= B \circ (u_1,\ldots,u_n)$ where each $u_j$ is $p_j$-summing.

%The statement relies on the fact that
%  ${\cal L}_{d;p_1, \ldots,p_n} = {\cal L}(\Pi_{p_1}, \ldots, \Pi_{p_n}),$
% which goes back to \cite{Pietsch}
%(a detailed proof can be found in \cite[Corolario 3.23]{david}).
%That is, given $C \in {\cal L}_{d;p_1, \ldots,
%p_n}(E_1, \ldots, E_n;F)$, %by Theorem \ref{teoremadominada}
%there
%are Banach spaces $G_1, \ldots, G_n$, linear operators $u_j \in
%\Pi_{p_j}(E_j;G_j)$, $j =1, \ldots, n$, and $B \in {\cal L}(G_1,
%\ldots, G_n;F)$ such that $C = B \circ (u_1, \ldots, u_n)$.
%shall be useful:

%\begin{theorem}\label{teoremadominada}
%\end{theorem}

First let us show that dominated multilinear mappings are weakly
sequentially continuous. Given $C \in {\cal L}_{d;p_1, \ldots,
p_n}(E_1, \ldots, E_n;F)$, applying (*) once more we can write $C
= B \circ (u_1,\ldots,u_n)$ where each $u_j$ is $p_j$-summing.
Since $p_j$-summing operators are completely continuous and $B$ is
continuous, it follows that $C$ is weakly sequentially continuous.
Now consider $A \in {^a}\!{\cal L}_{d;p_1, \ldots, p_n}(E_1,
\ldots, E_n;F)$. By Theorem \ref{teorema}, $i_F \circ A \in
\overline{{\cal L}_{d;p_1, \ldots, p_n}(E_1, \ldots,
E_n;\ell_\infty(B_{F^*}))}$. Moreover the space of weakly
sequentially continuous $n$-linear mappings from $E_1 \times
\cdots \times E_n$ to $\ell_\infty(B_{F^*})$ is closed and by the
first part of the proof it contains the $(p_1, \ldots,
p_n)$-dominated mappings, so $i_F \circ A$ is weakly sequentially
continuous. As $i_F$ is an isometric embedding, it follows that
$A$ is weakly sequentially continuous.

Recall that $p_j$-summing operators are weakly compact. Thus $
 [\Pi_{p_1},
\ldots, \Pi_{p_n}] \subseteq [{\cal W}].$
 Since $[{\cal W}] $ is
closed and injective and ${\cal L}(\Pi_{p_1}, \ldots, \Pi_{p_n}) \subseteq [\Pi_{p_1}, \ldots, \Pi_{p_n}],$
the second assertion follows from Proposition \ref{envoltoria}.
\end{proof}

\begin{remark}\rm Although there are Banach spaces $F$ such that $\mathcal{L}(\ell_1;F)={^a}{\Pi_p}(\ell_1;F),$ like
Hilbert spaces or reflexive quotients of a $C(K)$ space \cite[p. 313]{djt}, such type of coincidence does not
hold for multilinear mappings, that is $\mathcal{L}(^n\ell_1;F)\neq {^a}\! \mathcal{L}_{d;p_1, \ldots,
p_n}(^n\ell_1;F)$ for $n \geq 2$, $p_1, \ldots, p_n \geq 1$ and all Banach spaces $F$. Indeed, were the
equality true for some $n$, $p_1, \ldots, p_n$ and some $F,$ combining
 (*) with \cite[Proposition 41]{note} we would have $\mathcal{L}(^2\ell_1;F)= {^a}\! \mathcal{L}_{d;p_1,p_2}(^2\ell_1;F)$. So the same equality would hold true for any complemented subspace of $F$, thus for the scalar field.
However $\mathcal{L}(^2\ell_1) \neq {^a} \mathcal{L}_{d;p_1, p_2}(^2\ell_1)$ since the bilinear form on
$\ell_1$ constructed in \cite[p. 83]{acg1} is not Arens-regular.
\end{remark}

%\smallskip
As mentioned earlier, ${^a}{\Pi_p} = {\cal H}$, so it follows that ${^a}{\Pi_p} = {^a}{\Pi_q}$ for every $p,q
\geq 1$ (see also \cite[Corollary 20.7.7]{jarchow}). Although we do not know whether ${^a}\!{\cal L}_{d;p_1,
\ldots, p_n} = {^a}\!{\cal L}_{d;q_1, \ldots, q_n}$, we are able to identify another multilinear
generalization of the ideal of absolutely summing linear operators in which this phenomenon does occur (as
usual, the properties of a given operator ideal are to be found among its several multilinear
generalizations, rather than in a specific one):

\begin{definition}\rm (Composition ideals - see \cite{bpr, FloretResults}) Let ${\cal I}$ be a Banach
operator ideal. An $n$-linear mapping $A \in {\cal L}(E_1, \ldots, E_n;F)$ belongs to ${\cal I} \circ {\cal
L}$ - in this case we write $A \in {\cal I} \circ {\cal L}(E_1, \ldots, E_n;F)$ - if there are a Banach space
$G$, an $n$-linear mapping $B \in {\cal L}(E_1, \ldots, E_n;G)$ and an operator $u \in {\cal I}(G;F)$ such
that $A = u \circ B$. $ {\cal I} \circ {\cal L}$ is a multi-ideal which becomes a Banach multi-ideal with the
norm
\[\|A\|_{{\cal I} \circ {\cal L}} := \inf\{\|u\|_{\cal I}\|B\|: A = u \circ B, B \in {\cal L}(E_1, \ldots,
E_n;G), u \in {\cal I}(G;F)\}.\]
\end{definition}

\begin{proposition} ${^a}\!({\cal I} \circ {\cal L}) = {^a}{\cal I} \circ {\cal L}$ for every
Banach operator ideal $\cal I$. In particular, ${^a}\!(\Pi_p \circ {\cal L}) = {^a}\!(\Pi_q \circ {\cal L})$
for every $p, q \geq 1$.
\end{proposition}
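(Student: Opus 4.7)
The plan is to prove the two inclusions separately, with the harder direction handled by passing to linearizations on the projective tensor product $U := E_1 \widehat\otimes_\pi \cdots \widehat\otimes_\pi E_n$.

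First I would settle the easy inclusion ${^a}{\cal I}\circ{\cal L} \subseteq {^a}\!({\cal I}\circ{\cal L})$. Given $A = u\circ B$ with $u \in {^a}{\cal I}(G;F)$ and $B \in {\cal L}(E_1,\ldots,E_n;G)$, the definition of ${^a}{\cal I}$ supplies a Banach space $H$ and an operator $v \in {\cal I}(G;H)$ such that for every $\varepsilon > 0$ there is $K_\varepsilon > 0$ with
$$\left\|\sum_i u(y_i)\right\| \leq K_\varepsilon \left\|\sum_i v(y_i)\right\| + \varepsilon\sum_i\|y_i\|$$
for every finite family $(y_i) \subseteq G$. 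Substituting $y_i = B(x_i^1,\ldots,x_i^n)$ and majorising $\|B(x_i^1,\ldots,x_i^n)\|$ by $\|B\|\cdot\|x_i^1\|\cdots\|x_i^n\|$ (after harmlessly rescaling $\varepsilon$ by $\|B\|$) yields the defining inequality for $A \in {^a}\!({\cal I}\circ{\cal L})$ with witness $v\circ B \in ({\cal I}\circ{\cal L})(E_1,\ldots,E_n;H)$.

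For the reverse inclusion, I would start with $A \in {^a}\!({\cal I}\circ{\cal L})(E_1,\ldots,E_n;F)$ and invoke Theorem \ref{teorema}(b): for each $\varepsilon > 0$ there is $C_\varepsilon = v_\varepsilon \circ C'_\varepsilon \in ({\cal I}\circ{\cal L})(E_1,\ldots,E_n;\ell_\infty(B_{F^*}))$, with $v_\varepsilon \in {\cal I}$, such that $\|i_F\circ A - C_\varepsilon\| < \varepsilon$. Linearizing on $U$ gives $(C_\varepsilon)_L = v_\varepsilon \circ (C'_\varepsilon)_L \in {\cal I}(U;\ell_\infty(B_{F^*}))$ by the linear ideal property, and the isometry between sup norms of $n$-linear mappings and operator norms of their linearizations on $U$ turns the approximation into $\|i_F\circ A_L - (C_\varepsilon)_L\| < \varepsilon$. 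The linear ($n=1$) case of Theorem \ref{teorema}(b), which is precisely the Jarchow--Pe{\l}czy\'nski description of ${^a}{\cal I}$ recalled in Remark 2.2(c), then forces $A_L \in {^a}{\cal I}(U;F)$. Writing $A = A_L \circ \otimes$, where $\otimes \colon E_1\times\cdots\times E_n \to U$ is the canonical $n$-linear mapping, exhibits $A$ as an element of ${^a}{\cal I}\circ{\cal L}$.

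The ``in particular'' statement is immediate: by Remark 2.2(c) one has ${^a}\Pi_p = {\cal H} = {^a}\Pi_q$ for all $p,q\geq 1$, so the main assertion reduces both ${^a}\!(\Pi_p\circ{\cal L})$ and ${^a}\!(\Pi_q\circ{\cal L})$ to ${\cal H}\circ{\cal L}$. The only step requiring care is the linearization identification in the hard direction; this is routine once one remembers that composition ideals linearize cleanly (the identity $(v\circ C')_L = v\circ C'_L$) and that the projective-norm correspondence $A \leftrightarrow A_L$ is an isometry. There is no substantive obstacle beyond this bookkeeping.
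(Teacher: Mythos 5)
Your proof is correct, and while the easy inclusion ${^a}{\cal I}\circ{\cal L}\subseteq{^a}\!({\cal I}\circ{\cal L})$ is essentially the paper's argument (compose the linear witness/approximant for $u$ with $B$; the paper phrases it via condition (b) of Theorem \ref{teorema} rather than via the defining inequality, but the content is identical), your treatment of the hard inclusion is genuinely different. The paper first proves that ${\cal J}\circ{\cal L}$ is injective whenever ${\cal J}$ is (via the identity $(i\circ A)_L=i\circ A_L$ and the characterization $A\in{\cal J}\circ{\cal L}\Leftrightarrow A_L\in{\cal J}$), applies this with ${\cal J}={^a}{\cal I}$ to conclude that ${^a}{\cal I}\circ{\cal L}$ is closed and injective, and then invokes Proposition \ref{envoltoria} together with monotonicity of the hull: ${^a}\!({\cal I}\circ{\cal L})\subseteq{^a}\!({^a}{\cal I}\circ{\cal L})={^a}{\cal I}\circ{\cal L}$. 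You instead linearize the approximants supplied by Theorem \ref{teorema}(b), use $(v\circ C')_L=v\circ (C')_L$ and the isometry $D\mapsto D_L$ to land in the $n=1$ case of the same theorem, conclude $A_L\in{^a}{\cal I}(E_1\widehat\otimes_\pi\cdots\widehat\otimes_\pi E_n;F)$, and factor $A=A_L\circ\sigma_n$. Both arguments ultimately rest on the same linearization fact for composition ideals (the paper cites it as Proposition 2.2 of the reference on composition ideals), but your route is more direct and self-contained: it bypasses the need to verify closedness and injectivity of ${^a}{\cal I}\circ{\cal L}$ and reduces everything to the linear Jarchow--Pe{\l}czy\'nski description. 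What the paper's route buys in exchange is the intermediate statement, of independent interest, that ${\cal I}\circ{\cal L}$ inherits injectivity (and closedness) from ${\cal I}$. The ``in particular'' clause is handled identically in both. No gaps.
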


\begin{proof} Assume for a while that $\cal I$ is injective. Let $A \in {\cal L}
(E_1, \ldots, E_n;F)$ and $i \colon F \longrightarrow G$ be an isometric embedding such that $i \circ A \in
{\cal I} \circ {\cal L} (E_1, \ldots, E_n;G)$. %Keeping the terminology of the proof of Theorem \ref{teorema},
%by \cite[Proposition 2.2]{bpr}
Notice that according to \cite[Proposition 2.2]{bpr}, $(i \circ A)_L \in {\cal I}(E_1 \widehat\otimes_\pi
\cdots \widehat\otimes_\pi E_n;G)$. Since $i \circ A_L = (i \circ A)_L$, it follows that $i \circ A_L \in
{\cal I}(E_1 \widehat\otimes_\pi \cdots \widehat\otimes_\pi E_n;G)$. The injectivity of $\cal I$ gives $A_L
\in {\cal I}(E_1 \widehat\otimes_\pi \cdots \widehat\otimes_\pi E_n;F)$, so the factorization $A = A_L \circ
\sigma_n$, where $\sigma_n \colon E_1 \times \cdots \times E_n \longrightarrow E_1 \widehat\otimes_\pi \cdots
\widehat\otimes_\pi E_n$ is the canonical $n$-linear mapping given by $\sigma_n(x_1, \ldots, x_n)=x_1 \otimes
\cdots \otimes x_n$, shows that $A \in {\cal I} \circ {\cal L} (E_1, \ldots, E_n;F)$. Thus far we have proved
that ${\cal I} \circ {\cal L}$ is injective whenever $\cal I$ is injective. As ${^a}{\cal I}$ is closed and
injective, ${^a}{\cal I} \circ {\cal L}$ is closed and injective as well, so ${^a}{\cal I} \circ {\cal L} =
{^a}\!({^a}{\cal I} \circ {\cal L})$. From ${\cal I} \subseteq {^a}{\cal I}$ we get ${\cal I} \circ {\cal L}
\subseteq {^a}{\cal I} \circ {\cal L}$, hence ${^a}\!({\cal I} \circ {\cal
L}) \subseteq {^a}\!({^a}{\cal I} \circ {\cal L}) = {^a}{\cal I} \circ {\cal L}$.\\
\indent Given $A \in {^a}{\cal I} \circ {\cal L}(E_1, \ldots, E_n;F)$, write $A = u \circ B$ with $B \in
{\cal L}(E_1, \ldots, E_n;G)$ and $u \in {^a}{\cal I}(G;F)$. Given $\varepsilon
> 0$, there exists an operator $v \in {\cal I}(G;\ell_\infty(B_{F^*}))$ such that $\| v - i_F \circ u\| <
\frac{\varepsilon}{\|B\|}$. In this fashion $v \circ B \in {\cal I} \circ {\cal L}(E_1, \ldots,
E_n;\ell_\infty(B_{F^*}))$ and
\begin{eqnarray*}
\|v \circ B - i_F \circ A\|=\|v \circ B - i_F \circ u \circ B\|
%=\|(v - i_F \circ u) \circ B\|
\leq \| v - i_F \circ u\| \|B\| <
\varepsilon.
\end{eqnarray*}
This shows, by Theorem \ref{teorema}, that $A \in {^a}\!({\cal I} \circ {\cal L})(E_1, \ldots, E_n;F)$.
\end{proof}

\section{Aron-Berner extensions}
In this section we prove that the correspondence ${\cal M} \mapsto {^a}\!{\cal M}$ preserves Aron-Berner
stability. Recall that the Aron-Berner extension, sometimes called Arens extension or canonical extension, of
a multilinear mapping $A\in \mathcal{L}(E_1,\dots,E_n;F)$, denoted $\widetilde{A}\in
\mathcal{L}(E_1^{**},\dots,E_n^{**};F^{**})$, is defined  according to
$$ \widetilde{A}(x_1^{**},\ldots,x_n^{**})(\varphi)=\lim_{i_1} \cdots \lim_{i_n}(\varphi\circ
A)(x_{i_1},\dots,x_{i_n}) \;\;\text{ for all  } \varphi\in F^*,$$ where the limits are iterated limits and
$(x_{i_l})$ is a net in $E_l$ weak* converging to $x_l^{**}\in E_l^{**}.$ It turns out that $
\widetilde{A}(x_1^{**},\ldots,x_n^{**})=\lim_{i_1} \cdots \lim_{i_n} A(x_{i_1},\dots,x_{i_n})\;$ where the
limits are taken in the $w(F^{**},F^*)$ topology. The mapping  $$A\in \mathcal{L}(E_1,\dots,E_n;F)\mapsto
\widetilde{A}\in \mathcal{L}(E_1^{**},\dots,E_n^{**};F^{**})$$ is a linear into isometry. See \cite{din} for
more information about this bidual extension. Depending on the order the iterated limits are taken, $A$ can
have several (at most $n!$) different Aron-Berner extensions.

\begin{definition}\rm An $n$-ideal $\cal M$ is said to be {\it Aron-Berner stable} if all
Aron-Berner extensions of any $n$-linear mapping in $\cal M$ also
belong to $\cal M$.
\end{definition}

Some multi-ideals are known to be Aron-Berner stable (for example, integral mappings \cite[Proposition
8]{cl}) and some are known not to be (for example, weakly sequentially continuous mappings \cite[Example
1.9]{zalduendo}).

\begin{theorem}\label{ultimo} Let $\mathcal{M} $ be an Aron-Berner stable $n$-ideal. The following
are equivalent for a multilinear mapping $A \in {\cal L}(E_1, \ldots, E_n;F)$:\\
{\rm (a)} $A\in {^a}\!{\cal M}(E_1, \ldots, E_n;F)$.\\
{\rm (b)} All Aron-Berner extensions of $A$ belong to ${^a}\!{\cal M}(E_1^{**}, \ldots, E_n^{**};F^{**})$. \\
{\rm (c)} Some Aron-Berner extension of $A$ belongs to ${^a}\!{\cal M}(E_1^{**}, \ldots, E_n^{**};F^{**})$.\\
\indent In particular, ${^a}\!{\cal M}$ is Aron-Berner stable.
\end{theorem}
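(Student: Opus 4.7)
The plan is to prove the cyclic chain (a) $\Rightarrow$ (b) $\Rightarrow$ (c) $\Rightarrow$ (a); (b) $\Rightarrow$ (c) is immediate, and the ``in particular'' clause follows at once from (a) $\Rightarrow$ (b). So the real work is in (c) $\Rightarrow$ (a) and (a) $\Rightarrow$ (b).

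For (c) $\Rightarrow$ (a), I would suppose that some Aron-Berner extension $\widetilde{A}$ lies in ${}^a\mathcal{M}(E_1^{**},\ldots,E_n^{**};F^{**})$. Applying Definition 2.1 to $\widetilde{A}$ produces a Banach space $G$ and $B\in\mathcal{M}(E_1^{**},\ldots,E_n^{**};G)$ such that, for every $\varepsilon>0$, the dominating inequality holds on $E_1^{**}\times\cdots\times E_n^{**}$ with some constant $K_\varepsilon$. I would then restrict the vectors $x_i^j$ to the canonical image of $E_j$ in $E_j^{**}$; since the Aron-Berner extension at tuples coming from constant nets in $E_1,\ldots,E_n$ reproduces $A$, and the canonical inclusion $F\hookrightarrow F^{**}$ is isometric, the left-hand side reduces to $\|\sum_i A(x_i^1,\ldots,x_i^n)\|_F$. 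Setting $B':=B\circ(J_{E_1},\ldots,J_{E_n})$, where $J_{E_j}\colon E_j\to E_j^{**}$ is the canonical embedding, the ideal property gives $B'\in\mathcal{M}(E_1,\ldots,E_n;G)$, and the inherited inequality exhibits $A\in{}^a\mathcal{M}(E_1,\ldots,E_n;F)$.

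For (a) $\Rightarrow$ (b), fix an Aron-Berner extension $\widetilde{A}$ corresponding to any ordering $\pi$ of the iterated weak-star limits. By Theorem \ref{teorema}, for each $\varepsilon>0$ I can find $C_\varepsilon\in\mathcal{M}(E_1,\ldots,E_n;\ell_\infty(B_{F^*}))$ with $\|i_F\circ A-C_\varepsilon\|<\varepsilon$. Since the bidual extension is a linear isometry and Aron-Berner stability of $\mathcal{M}$ ensures $\widetilde{C_\varepsilon}\in\mathcal{M}(E_1^{**},\ldots,E_n^{**};\ell_\infty(B_{F^*})^{**})$ (using the same ordering $\pi$), we get $\|\widetilde{i_F\circ A}-\widetilde{C_\varepsilon}\|<\varepsilon$. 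The key intertwining identity
\[\widetilde{i_F\circ A}=i_F^{**}\circ\widetilde{A},\]
verified by unwinding the iterated weak-star limits and using that $\psi\circ i_F\in F^*$ for every $\psi\in\ell_\infty(B_{F^*})^*$, then yields $\|i_F^{**}\circ\widetilde{A}-\widetilde{C_\varepsilon}\|<\varepsilon$. To meet the hypothesis of Theorem \ref{teorema}(c), I would then pick an isometric embedding $j\colon\ell_\infty(B_{F^*})^{**}\to\ell_\infty(\Gamma)$ (take $\Gamma$ to be the closed unit ball of $\ell_\infty(B_{F^*})^{***}$). Then $j\circ i_F^{**}\colon F^{**}\to\ell_\infty(\Gamma)$ is an isometric embedding that does not depend on $\varepsilon$, the composition $j\circ\widetilde{C_\varepsilon}$ lies in $\mathcal{M}(E_1^{**},\ldots,E_n^{**};\ell_\infty(\Gamma))$ by the ideal property, and $\|j\circ i_F^{**}\circ\widetilde{A}-j\circ\widetilde{C_\varepsilon}\|<\varepsilon$. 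Theorem \ref{teorema}(c) then gives $\widetilde{A}\in{}^a\mathcal{M}(E_1^{**},\ldots,E_n^{**};F^{**})$; since $\pi$ was arbitrary, (b) holds.

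The main obstacle is the intertwining identity $\widetilde{i_F\circ A}=i_F^{**}\circ\widetilde{A}$, together with the bookkeeping that it respects the chosen ordering of iterated limits — this is what makes ``all extensions of $i_F\circ A$'' correspond to ``all extensions of $A$'' and allows AB-stability of $\mathcal{M}$ to be leveraged on the approximants $\widetilde{C_\varepsilon}$. Once this is set up, the passage from the ungainly target $\ell_\infty(B_{F^*})^{**}$ back to an $\ell_\infty(\Gamma)$-target is handled cheaply by any further isometric embedding, exploiting the flexibility of Theorem \ref{teorema}(c) in the choice of $\Gamma$.
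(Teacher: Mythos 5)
Your proposal is correct and follows essentially the same route as the paper: the heart of both arguments is the intertwining identity relating the Aron--Berner extension of $i_F\circ A$ to an isometric embedding composed with $\widetilde{A}$ (the paper phrases it with the weak*-weak*-continuous map $I_{F^{**}}\colon F^{**}\to\ell_\infty(B_{F^*})$, you with $i_F^{**}$ into $\ell_\infty(B_{F^*})^{**}$ --- your version is, if anything, the more carefully justified one), combined with the isometry of the extension map, Aron--Berner stability applied to the approximants $\widetilde{C_\varepsilon}$, and injectivity of ${}^a\mathcal{M}$. Your extra embedding $j$ into an $\ell_\infty(\Gamma)$ is not needed (the paper simply notes $\overline{\mathcal{M}}\subseteq{}^a\mathcal{M}$ and invokes injectivity), and your hands-on verification of (c) $\Rightarrow$ (a) via Definition 2.1 is just an unwound form of the paper's appeal to the ideal property and injectivity of ${}^a\mathcal{M}$.
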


\begin{proof} (a) $\Longrightarrow$ (b) Let $\widetilde{A}$ be an Aron-Berner extension of $A$. In what
follows the notation $\widetilde{B}$ means the Aron-Berner extension of $B$ where the iterated limits are
taken in the same order as in $\widetilde{A}$. Firstly we remark that the natural embedding
 $$I_{F^{**}} \colon x^{**}\in F^{**}\mapsto (<x^{**},\varphi>)_{\varphi\in B_{F^*}}\in
 \ell_\infty(B_{F^*}),$$is a linear isometry that extends the isometric embedding $i_F$
 and it is weak*-weak* continuous since it is the transpose of the
 mapping $$(a_\varphi)_{\varphi\in B_{F^*}}\in\ell_1(B_{F^*}) \mapsto \sum_{\varphi\in B_{F^*}} a_\varphi
 \cdot
 \varphi
 \in F^*.$$ This remark proves that $\widetilde{i_F\circ A}= I_{F^{**}}\circ\widetilde{ A}.$
Assume that $A\in {^a}\!{\cal M}(E_1,\dots,E_n;F)$ and fix $\varepsilon >0.$ According to condition (b) in
Theorem \ref{teorema}, there is an $n$-linear mapping $C\in \mathcal{M}(E_1,\dots,E_n;\ell_\infty(B_{F^*}))$
such that $\|i_F\circ A - C\| < \varepsilon.$ Hence
$$\|I_{F^{**}}\circ\widetilde{
A}-\widetilde{C}\|=\|\widetilde{i_F\circ A} -
\widetilde{C}\|=\|\widetilde{(i_F\circ A-C)}\|< \varepsilon.$$ The
Aron-Berner stability of $\cal M$ assures that $\widetilde{C}$
belongs to $\cal M$, thus we have seen that
$I_{F^{**}}\circ\widetilde{A}$ is in the closure of
$\mathcal{M}(E_1^{**},\dots,E_n^{**};(\ell_\infty(B_{F^*}))^{**}).$
Hence $I_{F^{**}}\circ\widetilde{A} \in {^a}\!{\cal
M}(E_1^{**},\dots,E_n^{**};(\ell_\infty(B_{F^*}))^{**}),$ and
because of the injectivity of ${^a}\!{\cal M},$
$\widetilde{A} \in {^a}\!{\cal M}(E_1^{**},\dots,E_n^{**};F^{**})).$ \\
(b) $\Longrightarrow$ (c) is obvious.\\
(c) $\Longrightarrow$ (a) By $J_E$ we mean the canonical isometric embedding from $E$ into $E^{**}$. Let
$\widetilde A$ be an Aron-Berner extension of $A$ belonging to ${^a}\!{\cal M}(E_1^{**}, \ldots,
E_n^{**};F^{**})$. The ideal property yields that $J_F \circ A = \widetilde{A} \circ (J_{E_1}, \ldots,
J_{E_n}) \in {^a}\!{\cal M}(E_1, \ldots, E_n;F^{**})$. From the injectivity of ${^a}\!{\cal M}$ it follows
that $A  \in {^a}\!{\cal M}(E_1, \ldots, E_n;F)$.
 \end{proof}

It is well known that a linear operator $u$ belongs to ${^a}\Pi_p$, that is, $u$ is absolutely continuous, if
and only
 if $u^{**}$ belongs to ${^a}\Pi_p$ as well \cite[Corollary 15.5]{djt}. It is clear that Theorem \ref{ultimo}
generalizes this result to multilinear mapings. Moreover, taking
$n=1$ in Theorem \ref{ultimo} one sees that even in the linear
case
 \cite[Corollary 15.5]{djt} is a particular case of a much more general situation:

\begin{corollary} Let $\cal I$ be an operator ideal such that $u \in {\cal I} \Longrightarrow u^{**} \in {\cal I}$.
Given $u \in {\cal L}(E;F)$, $u \in {^a}{\cal I}(E;F)$ if and only if $u^{**} \in {^a}{\cal
I}(E^{**};F^{**})$.
\end{corollary}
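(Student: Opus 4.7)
The plan is to derive this as an immediate specialization of Theorem \ref{ultimo} to the one-variable case. The key observation is that for a continuous linear operator $u\colon E\to F$, there is only one Aron-Berner extension, and it coincides with the bitranspose $u^{**}\colon E^{**}\to F^{**}$. Indeed, with $n=1$ the definition of $\widetilde{u}$ reduces to a single weak* limit: if $(x_i)$ is a net in $E$ with $x_i \to x^{**}$ in $\sigma(E^{**},E^*)$, then for each $\varphi\in F^*$ one has $\widetilde u(x^{**})(\varphi)=\lim_i \varphi(u(x_i))=\lim_i (u^*\varphi)(x_i)=\langle x^{**},u^*\varphi\rangle = \langle u^{**}x^{**},\varphi\rangle$, so $\widetilde u=u^{**}$.

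Next I would observe that the hypothesis of the corollary, namely $u\in\mathcal{I}\Longrightarrow u^{**}\in\mathcal{I}$, is precisely the statement that $\mathcal{I}$ is Aron-Berner stable in the sense of the definition preceding Theorem \ref{ultimo}, once one interprets $\mathcal{I}$ as a $1$-ideal. Thus the hypotheses of Theorem \ref{ultimo} are met with $n=1$, $\mathcal{M}=\mathcal{I}$, $E_1=E$ and $F$ as given.

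Applying Theorem \ref{ultimo} then yields directly the equivalence of (a) $u\in{^a}\mathcal{I}(E;F)$, and (c) some Aron-Berner extension of $u$ lies in ${^a}\mathcal{I}(E^{**};F^{**})$. Since by the observation above there is exactly one such extension, namely $u^{**}$, this is the desired equivalence $u\in{^a}\mathcal{I}(E;F)\iff u^{**}\in{^a}\mathcal{I}(E^{**};F^{**})$.

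There is no real obstacle here; the only thing to check carefully is the matching of terminology between the general $n$-linear Aron-Berner extension and the classical bitranspose in the linear case, and that the hypothesis of the corollary really is Aron-Berner stability for $n=1$. Both verifications are immediate from the definitions.
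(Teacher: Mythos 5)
Your proposal is correct and is exactly the paper's intended argument: the authors state the corollary as the case $n=1$ of Theorem \ref{ultimo}, and your verification that the unique Aron--Berner extension of a linear operator is its bitranspose (so that the hypothesis $u\in\mathcal{I}\Rightarrow u^{**}\in\mathcal{I}$ is precisely Aron--Berner stability for a $1$-ideal) supplies the only detail the paper leaves implicit.
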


For instance, maximal Banach operator ideals satisfy the condition
$u \in {\cal I} \Longrightarrow u^{**} \in {\cal I}$
\cite[Corollary 17.8.4]{df} (observe that, for being closed,
${^a}{\cal I}$ is maximal only if ${^a}{\cal I}= {\cal L}$). Let
us stress that the above corollary is a genuine generalization of
\cite[Corollary 15.5]{djt}: indeed, on the one hand the proof of
\cite[Corollary 15.5]{djt} works only for operator ideals
contained in $\cal W$, on the other hand there are maximal
operator ideals not contained in $\cal W$, for example the ideal
of cotype 2 operators (cf. \cite[17.4]{df}).

\medskip

\noindent {\sc Acknowledgements.} Part of this paper was written
while G.B. and L.P. were visiting the Departamento de An\'alisis
Matem\'atico at Universidad de Valencia. They thank Pablo Galindo,
Pilar Rueda and the members of the department for their
hospitality and support.

\small{
\vspace*{1em}
 \noindent [Geraldo Botelho] Faculdade de Matem\'atica, Universidade Federal de Uberl\^andia,
38.400-902 - Uberl\^andia, Brazil,  e-mail: botelho@ufu.br.

\medskip

\noindent [Pablo Galindo] Departamento de An\'alisis Matem\'atico,
Universidad de Valencia, 46.100 Burjasot - Valencia, Spain,
e-mail: Pablo.Galindo@uv.es.

\medskip

\noindent [Leonardo Pellegrini] Departamento de Matem\'atica,
Instituto de Matem\'atica e Estat\'istica, Universidade de S\~ao
Paulo, Caixa Postal 66281, 05315-970 S\~ao Paulo, Brazil, e-mail:
leonardo@ime.usp.br.}
\end{document}